\title{Algorithms for computing maximal lattices in bilinear \\ (and quadratic) spaces over number fields}
\author{Jonathan Hanke}
\date{\today}                                           
\begin{document}
\maketitle

\newtheorem{thm}{Theorem}[section]  
\newtheorem{lem}[thm]{Lemma}  
\newtheorem{cor}[thm]{Corollary}  
\newtheorem{defn}[thm]{Definition}  
\newtheorem{rem}[thm]{Remark}  
\newtheorem{conj}[thm]{Conjecture}
\newtheorem{alg}[thm]{Algorithm}  
\newtheorem{prob}[thm]{Problem}

\newcommand{\GCD}{\mathrm{GCD}}
\newcommand{\GL}{\mathrm{GL}}
\newcommand{\Char}{\mathrm{Char}}
\newcommand{\Rad}{\mathrm{Rad}}

\newcommand{\Ker}{\mathrm{Ker}}
\renewcommand{\Im}{\mathrm{Im}}

\newcommand{\NormF}{\mathrm{N}_{F/\Q}}
\newcommand{\Hom}{\mathrm{Hom}}

\newcommand{\even}{\text{-even}}
\newcommand{\aevenatp}{\a\even\text{ at $\p$}}

\newcommand{\B}{\mathcal{B}}

\newcommand{\x}{\vec{x}}
\newcommand{\y}{\vec{y}}
\renewcommand{\v}{\vec{v}}
\newcommand{\w}{\vec{w}}

\newcommand{\vr}{\vec{r}}
\newcommand{\vl}{\vec{l}}
\newcommand{\va}{\vec{a}}
\newcommand{\vi}{\vec{\iota}}

\newcommand{\Z}{\mathbb{Z}}
\newcommand{\N}{\mathbb{N}}
\newcommand{\Q}{\mathbb{Q}}
\newcommand{\F}{\mathbb{F}}
\renewcommand{\P}{\mathbb{P}}

\renewcommand{\SS}{\mathbb{S}}

\renewcommand{\O}{\mathcal{O}}
\newcommand{\C}{\mathcal{Q}}
\newcommand{\D}{\mathcal{D}}    

\renewcommand{\a}{\mathfrak{a}}
\renewcommand{\b}{\mathfrak{b}}
\renewcommand{\c}{\mathfrak{c}}

\newcommand{\p}{\mathfrak{p}}
\renewcommand{\l}{\mathfrak{l}}
\newcommand{\n}{\mathfrak{n}}
\newcommand{\s}{\mathfrak{s}}

\newcommand{\al}{\alpha}
\newcommand{\ve}{\varepsilon}
\newcommand{\ord}{\mathrm{ord}}

\newcommand{\ra}{\rightarrow}
\newcommand{\surj}{\twoheadrightarrow}

\newcommand{\half}{\tfrac{1}{2}}

\renewcommand{\L}{\errorrrrrr}

\newcommand{\latI}{\mathcal{I}}
\newcommand{\latK}{\mathcal{K}}
\newcommand{\latL}{\mathcal{L}}
\newcommand{\latM}{\mathcal{M}}
\newcommand{\latN}{\mathcal{N}}

\newcommand{\Span}{\text{Span}}

\newcommand{\pip}{\pi_\p}
\newcommand{\q}{{\mathfrak q}}

\begin{abstract}
In this paper we describe an algorithm that quickly computes a maximal $\a$-valued lattice in an $F$-vector space equipped with  a non-degenerate bilinear 
form, where $\a$ is a fractional ideal in a number field $F$.  
We then apply this construction to give an algorithm to compute an $\a$-maximal lattice in a quadratic space over any number field $F$ where the prime $p=2$ is unramified.
We also develop the theory of $\p$-neighbors for 
$\a$-valued quadratic lattices at 
an arbitrary prime $\p$ of $\O_F$ (including when $\p\mid 2$) and prove its close connection to the residual geometry of certain quadrics mod $\p$.
Finally we give a well-known application of $\p$-neighboring lattices and exact mass formulas 
to compute a complete set of representatives for the classes in a given genus of (totally definite) quadratic $\O_F$-lattices.
\end{abstract}



\section{Introduction and Notation}
In the study of the arithmetic of quadratic forms there has historically been a strong focus on explicit computations and specific examples.  One 
of the first and most important instances
of this was Gauss's computation of (proper integral) equivalence classes of primitive positive definite binary quadratic forms of fixed discriminant, and their arrangement into genera based on the values of certain ``genus characters''. These numerical investigations led to important conjectures about quadratic fields and quadratic forms of class number one that have only recently begun to be resolved.  (See \cite{Stark__Gauss_class_number_problem, Goldfeld__Gauss_class_number_problem} for an overview.)  

Another well-known example is the explicit formula of Jacobi for the number $r_4(m)$ of representations of a positive integer $m$ as a sum of four integer squares, given by
$$
r_4(m) = 8 \sum_{\substack{0<d\mid m \\ 4\nmid d}} d > 0,
$$
and the many subsequent efforts of other authors 
to prove similar explicit representability and representation number formulas for other positive definite quadratic forms.

In recent times, the use of computers offers us the potential to perform previously unimaginable computations that can extend both the scope of our vision and our ability to prove concrete enumerative theorems 
too complex for a more traditional ``by-hand'' case-by-case enumeration.  In the arithmetic theory of quadratic forms, this is still a promise 
largely waiting to be realized (e.g. see Remark \ref{rem:class_enumeration_literature}).

One of the fundamental objects in this theory is the 
{\bf maximal (integer-valued) quadratic lattice}, both because these have the fewest complications at ``bad'' primes (i.e. primes dividing the level of the associated local quadratic forms), and because there is exactly one genus of maximal quadratic lattices in any (non-degenerate) quadratic space.
These are very much analogous to studying maximal orders in number fields, or more generally in central simple algebras over them, and many theorems become substantially simpler in that context (e.g. \cite{Shimura_exact_mass, GHY_mass, Bocherer-Nebe, Walling__Explcit_Siegel_Theory, Shimura_Clifford_groups_book}).

While the importance of maximal lattices in the theory has been clear for a long time (e.g. \cite{Brandt1932, Brandt1937, Brandt1945}), proving explicit enumerative results 
even in this simplified context 
has been a rather daunting endeavor due to their complexity and many opportunities for errors.  A pioneer in these investigations has been Shimura, whose many papers \cite{Shimura__Mass_survey_article, Shimura_exact_mass, Shimura__number_of_representations, Shimura__mass_formulas_and_ternary_class_number_one_enumeration} and recent book \cite{Shimura_Clifford_groups_book} focusing on the arithmetic of maximal lattices have set the stage for other authors' work \cite{Ha_thesis_paper, GHY_mass, Yoshinaga2010, HIraka2006, Murata2007}.  Several other papers in a different style where maximal lattices play an important role are \cite{Beliopetsky2007, Ponomarev1976, Ponomarev1981, Brzezinski1974, Yang2004}, and they are also mentioned in the introductory books \cite[\S82H and \S104:9-10]{OMeara:1971zr} and \cite[\S9.3]{Gerstein_book}. 

Our hope is that this paper and the supporting open-source implementation \cite{Ha-Sage-QF-class, Hanke:uq} over $\Q$ in the freely-available Sage computer algebra system \cite{sage-4.6.2} will make the arithmetic of maximal lattices more accessible to study and numerical experimentation.  
Among other things, this implementation includes functionality for computing with quadratic forms/spaces/lattices, as well as finding $p$-neighbors, genus representatives and maximal lattices when $F= \Q$.
One application of these algorithms is the author's recent work \cite{Hanke:CN1_maximal_over_QQ} enumerating all maximal definite quadratic lattices over $\Z$ of class number one in $n\geq 3$ variables.

\bigskip 

\noindent
{\bf Outline:}  The main results of this paper are to:
\begin{enumerate}
\item Prove an algorithm for computing a maximal bilinear lattice in a given non-degenerate bilinear space over an arbitrary number field.
\item Prove an algorithm for computing a maximal quadratic lattice in a given non-degenerate quadratic space over a number field where $p=2$ is unramified.
\item Explain a generalization of the theory of $\p$-neighbors allowing $\a$-valued quadratic lattices and its connection to residual geometry.
\item Prove an algorithm for enumerating the classes in a given genus of $\a$-valued quadratic lattices over an arbitrary number field.
\end{enumerate}

\bigskip 

\noindent
{\bf Acknowledgements:}  
This is an outgrowth of some Sage code written for quadratic forms over $\Z$ together with Gabrielle Nebe shortly after the Sage Days 13 Conference on ``Quadratic Forms and Lattices" held at the University of Georgia in March 2009.  The author gratefully acknowledges her guidance and helpful comments, as well as the thoughtful comments and corrections of the anonymous referee, and several references provided by Wai Kiu Chan and Rainer Schulze-Pillot. The author also warmly acknowledges the hospitality of the Boston University Mathematics Department for allowing him to visit for two months in Spring 2012, where the final writing of this paper occurred.  
This work was partially supported by the author's NSF Grant DMS-0603976.

\bigskip 

\noindent
{\bf Notation:}

Throughout this paper we denote by $\N, \Z, \Q, \mathbb{R}, \mathbb{C},$ and $\F_q$ respectively the positive integers, integers, rational numbers, real numbers, complex numbers, and finite field with $q$ elements.  If we take $F$ to be a number field (i.e. a finite dimensional field extension of $\Q$), then we let $\O_F$ be the ring of integers in $F$, $\p$ any non-zero prime ideal of $\O_F$, and take $\F_\p := \O_F/\p$ to be the (finite) residue field having $\NormF(\p)$ elements (where $\NormF(\p) \in \N$ is the absolute norm of $\p$).  We also usually let $\a$ and $\b$ denote (non-zero) fractional ideals of $F$ (i.e. invertible rank 1 $\O_F$-modules).

Given a number field $F$ and a (non-zero) prime ideal $\p$ of $\O_F$, we let $F_\p$ denote the $\p$-adic completion of $F$ and denote its (valuation) ring of integers by $\O_\p$.   By abuse of notation, we also denote the maximal ideal of $\O_\p$ as $\p$ and leave the reader to decide if the set $\p$ is $\p$-adically complete based on its usage.

If $V$ is a finite dimensional vector space over a 
field $K$ with ring of integers $R$, we say that a subset $L \subset V$ is a {\bf lattice (in V)} if $L$ is a finitely generated $R$-module whose $K$-span $L\otimes_R K = V$. Given a symmetric bilinear form $B:V \times V \rightarrow K$ we refer to the pair $(V,B)$ as a {\bf (symmetric) bilinear space} over $K$.  Similarly, given a quadratic form $Q:V \rightarrow K$ we refer to the pair $(V,Q)$ as a {\bf quadratic space} over $K$.  We also refer to an $\O_K$-lattice in a bilinear space or quadratic space over a number field $K$ respectively as a {\bf (global) bilinear} or {\bf (global) quadratic lattice}.  

Given a number field $K$ and a (non-zero) prime $\p$ of $\O_K$, we denote the associated {\bf local bilinear} or {\bf local quadratic spaces} at $\p$ respectively as $(V_\p, B_\p)$ or $(V_\p, Q_\p)$, where $V_\p := V\otimes_K K_\p$ and where $B_\p$ and $Q_\p$ denote the unique continuous extensions of $B$ and $Q$ to $V_\p$.  (We could also view $B_\p$ as the $K_\p$-linear extension of $B$ to $V_\p$, and $Q_\p$ as the $K_\p$-quadratic extension of $Q$ to $V_\p$.)  We also define the  (possibly bilinear or quadratic) {\bf local lattice} $L_\p := L \otimes_{\O_K} \O_p \subset V_\p$ associated to $L$ at $\p$.

Given a (local/global) bilinear space $(V,B)$, we define the {\bf associated quadratic space} $(V,Q_B)$ by $Q_B(\x) := B(\x,\x)$.  Similarly, given a quadratic space $(V,Q)$ we define the {\bf associated (Hessian) bilinear space} $(V,H)$ by $H(\x,\y) := H_Q(\x,\y) := Q(\x+\y) - Q(\x) - Q(\y)$.  Notice that these operations are {\it not inverses} of each other, and composing them has the effect of multiplying the (quadratic/bilinear) form by two.  (While it is often a convention to associate the ``Gram bilinear form'' $\frac12 H$ to a quadratic form, this is not natural unless 2 is a unit, and not even possible over fields of characteristic 2.)
We also adopt the convention that any notion for a bilinear space applied to a quadratic space $(V,Q)$ is applied to its associated bilinear space $(V,H)$, and 
conversely quadratic notions on a bilinear space $(V,B)$ are applied to the associated quadratic space $(V,Q)$.

Given a bilinear space $(V,B)$, we say that $\x, \y \in V$ are {\bf perpendicular} (or {\bf orthogonal})  $\iff B(\x,\y) = 0$, and for any subset $W\subseteq V$ we define $W^\perp :=\{\x \in V \mid B(W, \x) = \{0\} \}$.  We define the {\bf radical} of $V$ by $\mathrm{Rad}(V) := V^\perp$ and say that $V$ is {\bf non-degenerate} if $\mathrm{Rad}(V) = \{\vec 0\}$.  We call the 2-dimensional bilinear space $(V,B)$ given by $B((x_1, x_2),(y_1, y_2)) := x_1y_2 + x_2 y_1$ the {\bf hyperbolic plane}, and refer to any direct sum of these as a {\bf hyperbolic space}.  We say that a subspace $W \subseteq (V,B)$ is {\bf Lagrangian} if $W^\perp = W$, and that $W$ is {\bf weakly metabolic} if it admits a Lagrangian subspace.

We say that a vector $\v \neq \vec 0$ in a quadratic space $(V,Q)$ is {\bf isotropic} if $Q(\v) = 0$.
We say that a quadratic space is {\bf isotropic} if it contains an isotropic vector, and that is {\bf anisotropic} otherwise (i.e. $Q(\x)=0 \implies \x = \vec 0$).
We also refer to a subspace $W \subseteq (V,Q)$ as {\bf totally isotropic} if $Q(W) = \{0\}$.

Finally, given any vector space $V$ we define $\P(V)$ as the set of lines in $V$ passing through the origin $\vec 0$, and for any subset $\SS \subseteq V$ we define $\P(\SS) \subseteq \P(V)$ as the set of lines in $\P(V)$ having non-empty intersection with $\SS$.  We also define a {\bf maximal} object as being maximal with respect to the natural inclusion of such objects.  
For any $x \in \mathbb{R}$ we let $\lceil x\rceil$ denote the smallest integer $\geq x$ (i.e. the ceiling function).
For any ring $R$ 
we let $\Char(R)$ denote the characteristic of $R$, and   
for any subset $\SS$ of an $R$-module we let $\Span_R(\SS)$ as the $R$-module generated by $\SS$.

\section{Duality for Bilinear Lattices}

\subsection{Dual lattices and Modular lattices}

We begin by recalling some useful facts about bilinear forms and duality that we will use freely throughout the paper, as well as the all-important polarization identity.

\begin{defn}
Given a lattice $L$ in a bilinear space $(V, B)$ over a number field $F$, and a non-zero fractional ideal $\a$ of $F$, we define the 
{\bf $\a$-dual lattice} $L^{\#\a} \subset (V,B)$ of $L$ as the $\O_F$-lattice 
$$
L^{\#\a} := \{\v \in V\mid B(\v, L) \subseteq \a \}.
$$
When $\a= \O_F$ this is simply referred to as the {\bf dual lattice} of $L$, denoted by $L^\#$.
One of the main uses for the $\a$-dual lattice is to help understand the $\a$-valued superlattices of $L$.  
\end{defn}

\begin{lem}[$\a$-valued lattices and duals]
If $L$ is a lattice in a non-degenerate bilinear space $(V, B)$ over a number field $F$ and $\a$ is a non-zero fractional ideal of $F$, then 
$$
\text{$L$ is $\a$-valued}
\iff 
L \subseteq L^{\#\a}.
$$
\end{lem}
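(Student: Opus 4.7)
The proof is essentially a direct unwinding of the two definitions involved, so my plan is to expose it cleanly in two symmetric directions without invoking any deeper structure. Recall that $L$ being $\a$-valued is the statement $B(L,L)\subseteq\a$, while membership in $L^{\#\a}$ is the statement $B(\v,L)\subseteq\a$. Both conditions are really the same assertion about the bilinear pairing $B$ restricted to $L\times L$, so the lemma should fall out of definition-chasing.

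For the forward implication, I would assume $B(L,L)\subseteq\a$ and pick an arbitrary $\v\in L$. Since $\v\in L$, the set $B(\v,L)$ is a subset of $B(L,L)$, hence contained in $\a$; by the defining property of $L^{\#\a}$ this gives $\v\in L^{\#\a}$, and since $\v$ was arbitrary we conclude $L\subseteq L^{\#\a}$. For the reverse implication, I would assume $L\subseteq L^{\#\a}$ and pick arbitrary $\v,\w\in L$; then $\v\in L^{\#\a}$ forces $B(\v,L)\subseteq\a$, and in particular $B(\v,\w)\in\a$. Varying $\v$ and $\w$ over $L$ shows $B(L,L)\subseteq\a$, i.e.\ $L$ is $\a$-valued.

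There really is no main obstacle: neither direction requires non-degeneracy of $(V,B)$, symmetry of $B$, or any finiteness property of $L$ beyond what is already in the hypotheses. The non-degeneracy hypothesis in the statement presumably plays a role elsewhere in the paper (e.g.\ to guarantee that $L^{\#\a}$ is genuinely a lattice rather than just an $\O_F$-submodule containing $\mathrm{Rad}(V)$), but it is not actually invoked in this equivalence. I would therefore present the proof as a short two-paragraph bi-implication and move on, leaving the more substantive use of duality for the subsequent results on maximal lattices.
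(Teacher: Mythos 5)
Your proof is correct and is essentially the same one-line definition-chasing argument given in the paper, just unwound into its two implications. Your remark that non-degeneracy is not actually invoked here is also accurate.
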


\begin{proof}
This follows since $L$ is $\a$-valued $\iff B(L,L) \subseteq \a \iff 
L \subseteq L^{\#\a}$.
\end{proof}

\begin{lem}[Lattice scaling and Duality]  \label{lem:lattice_scaling_and_duality}
Suppose that $\a$ and $\b$ are non-zero fractional ideals of $F$, and $L$ is a lattice in a non-degenerate bilinear space over $F$.  Then
$L^{\#\a} = \a L^{\#}$ and $(\b L)^\# = L^{\#\b^{-1}}$.
\end{lem}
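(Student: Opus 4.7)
The plan is to prove each identity by a pair of inclusions, leveraging the $F$-bilinearity of $B$ together with the invertibility of the fractional ideals $\a$ and $\b$ (so that $\a\a^{-1} = \b\b^{-1} = \O_F$).

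For the first identity $L^{\#\a} = \a L^\#$, the easy inclusion $\a L^\# \subseteq L^{\#\a}$ is immediate: any element of $\a L^\#$ is a finite sum $\sum a_i w_i$ with $a_i \in \a$ and $w_i \in L^\#$, so for every $\ell \in L$ bilinearity gives $B(\sum a_i w_i, \ell) = \sum a_i B(w_i,\ell) \in \a\cdot\O_F = \a$. The reverse inclusion is the step I expect to be the main obstacle, and it is where the invertibility of $\a$ really enters. Given $\v \in L^{\#\a}$, so that $B(\v,L)\subseteq \a$, I would show first that $\a^{-1}\v \subseteq L^\#$: for any $c\in \a^{-1}$ and $\ell \in L$, one has $B(c\v, \ell) = c\, B(\v, \ell) \in \a^{-1}\cdot \a = \O_F$. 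Then multiplying by $\a$ and using $\a\cdot\a^{-1} = \O_F$ (writing $1 = \sum a_i b_i$ with $a_i\in \a$, $b_i \in \a^{-1}$) yields $\v = \sum a_i (b_i \v) \in \a L^\#$, as desired.

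For the second identity $(\b L)^\# = L^{\#\b^{-1}}$, the argument is a single computation using bilinearity plus the same invertibility trick applied to $\b$. Writing out the definition, $\v \in (\b L)^\#$ means $B(\v, b\ell) = b\,B(\v,\ell) \in \O_F$ for every $b\in\b$ and $\ell \in L$, which is equivalent to $\b\cdot B(\v,L) \subseteq \O_F$. By the same reasoning as above (multiply by $\b^{-1}$ and use $\b\b^{-1}=\O_F$), this is equivalent to $B(\v,L) \subseteq \b^{-1}$, which is precisely the condition $\v \in L^{\#\b^{-1}}$.

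Throughout, I would lean on the fact that $L$ is a finitely generated $\O_F$-module so that $B(\v, L)$ is a finitely generated $\O_F$-submodule of $F$ (hence a fractional ideal or $\{0\}$), which makes the set-theoretic statements $\b\cdot B(\v,L)\subseteq \O_F \iff B(\v,L)\subseteq \b^{-1}$ rigorous. Non-degeneracy of $(V,B)$ is not needed for either equality per se, but it is what guarantees the duals are genuine lattices in $V$ rather than possibly degenerate $\O_F$-modules, matching the setting in the lemma's hypothesis.
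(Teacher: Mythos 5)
Your proof is correct and follows essentially the same route as the paper's: the easy inclusion by bilinearity, the hard inclusion via multiplying by $\a^{-1}$ (resp.\ $\b^{-1}$) and using the invertibility $\a\a^{-1} = \O_F$. The extra remarks you make (writing out $1 = \sum a_i b_i$ and noting $B(\v,L)$ is a fractional ideal) are harmless elaborations of steps the paper leaves implicit, though neither is strictly needed since the implication $\b S \subseteq \O_F \Leftrightarrow S \subseteq \b^{-1}$ already holds for arbitrary subsets $S$.
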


\begin{proof}
Since $B(\a L^\#, L) \subseteq \a B(L^\#, L) \subseteq \a$ we know that $\a L^\# \subseteq L^{\#\a}$, and conversely $B(L^{\#\a}, L) \subseteq \a \implies B(\a^{-1}L^{\#\a}, L) \subseteq \O_F \implies \a^{-1} L^{\#\a} \subseteq L^{\#} \implies  L^{\#\a} \subseteq \a L^{\#}$, so we have the equality $L^{\#\a} = \a L^{\#}$.

Similarly $B((\b L)^\#, \b L) \subseteq \O_F \implies B((\b L)^\#, L)\subseteq \b^{-1} \implies (\b L)^\# \subseteq L^{\#\b^{-1}}$, and also $B(L^{\#\b^{-1}}, L) \subseteq \b^{-1} \implies B(L^{\#\b^{-1}}, \b L) \subseteq \O_F \implies  L^{\#\b^{-1}} \subseteq (\b L)^\#$, giving the desired equality $(\b L)^\# = L^{\#\b^{-1}}$.
\end{proof}

\begin{lem}[Double duals] \label{Lem:double_dual}
If $L$ is a bilinear lattice in a non-degenerate bilinear space $(V,B)$ and $\a$ is a non-zero fractional ideal of $F$, then 
$(L^{\#\a})^{\#\a} = L$.
\end{lem}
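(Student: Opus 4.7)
The plan is to first reduce to the case $\a = \O_F$ using Lemma \ref{lem:lattice_scaling_and_duality}, and then to prove the classical double-dual identity $L^{\#\#} = L$ by localizing at each prime $\p$ of $\O_F$, where lattices become free and the statement is transparent.

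For the reduction, I would simply compute
$$
(L^{\#\a})^{\#\a} = \a\bigl(L^{\#\a}\bigr)^{\#} = \a\bigl(\a L^{\#}\bigr)^{\#} = \a\cdot\a^{-1}\bigl(L^{\#}\bigr)^{\#} = L^{\#\#},
$$
where the first equality applies the identity $M^{\#\a} = \a M^{\#}$ of Lemma \ref{lem:lattice_scaling_and_duality} to $M = L^{\#\a}$, the second substitutes $L^{\#\a} = \a L^{\#}$, and the third applies the identity $(\b N)^{\#} = N^{\#\b^{-1}}$ with $\b = \a$ and $N = L^{\#}$. So it remains to prove $L^{\#\#} = L$.

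The easy inclusion $L \subseteq L^{\#\#}$ follows at once from symmetry of $B$: for any $\v \in L$ and any $\w \in L^{\#}$ we have $B(\v,\w) = B(\w,\v) \in \O_F$, so $\v \in L^{\#\#}$. The reverse inclusion is the substantive part, and the main obstacle is that $\O_F$-lattices need not be free, so one cannot directly write down a dual basis globally. My plan is to circumvent this by the local-global principle for lattices: I would verify that forming duals commutes with localization, i.e.\ $(L^{\#\#})_\p = (L_\p)^{\#\#}$ at every non-zero prime $\p$ of $\O_F$, and then prove the equality one prime at a time. Over the local ring $\O_\p$, which is a discrete valuation ring (hence a PID), the lattice $L_\p$ admits an $\O_\p$-basis $\{v_1,\ldots,v_n\}$. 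Non-degeneracy of $B$ means the Gram matrix $G = (B(v_i,v_j))$ is invertible over $F_\p$, so there is a unique dual basis $\{v_1^*,\ldots,v_n^*\}$ of $V_\p$ with $B(v_i^*,v_j) = \delta_{ij}$, and a direct verification shows $(L_\p)^{\#} = \Span_{\O_\p}(v_1^*,\ldots,v_n^*)$. Applying this twice (the dual of the dual basis is the original basis) gives $(L_\p)^{\#\#} = L_\p$.

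Putting these steps together, $L^{\#\#}$ and $L$ have equal localizations at every prime, so they coincide as $\O_F$-submodules of $V$, completing the proof. The only real subtlety to justify carefully is the commutation of duality with localization, which follows because $L$ is finitely generated and because $\a \otimes_{\O_F} \O_\p = \a_\p$ for any fractional ideal $\a$.
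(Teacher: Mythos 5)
Your proof is correct, and your reduction to the case $\a=\O_F$ via Lemma~\ref{lem:lattice_scaling_and_duality} is exactly the chain of equalities the paper uses. The difference lies in the base case $(L^{\#})^{\#}=L$: the paper simply cites it from \cite[Lemma 1.5(ii), p203]{Scharlau_book} and \cite[\S82F, pp230-231]{OMeara:1971zr}, whereas you prove it from scratch by localizing at each prime $\p$, using that $\O_\p$ is a discrete valuation ring so that $L_\p$ is free, and then verifying the identity via a dual basis. Your route is more self-contained and makes visible precisely where non-degeneracy enters (invertibility of the Gram matrix yields the dual basis); the paper's route is shorter. The one step you should spell out a bit more carefully is the commutation $(L^{\#})_\p = (L_\p)^{\#}$: the inclusion $(L^{\#})_\p \subseteq (L_\p)^{\#}$ is immediate, and for the reverse one uses that $L$ is finitely generated, so any $\w \in V_\p$ with $B(\w, L_\p) \subseteq \O_\p$ can, after clearing a denominator prime to $\p$, be seen to lie in $(L^{\#})_\p$; equivalently, one can note that $L^\#$ is itself an $\O_F$-lattice and that localization is exact on finitely generated modules. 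With that in place, equality of the two lattices $L$ and $L^{\#\#}$ can indeed be checked one prime at a time, and your argument goes through.
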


\begin{proof}
The equality $(L^{\#})^{\#} = L$ is given in \cite[Lemma 1.5(ii), p203]{Scharlau_book} and also \cite[\S82F, pp230-231]{OMeara:1971zr}.  From this and Lemma \ref{lem:lattice_scaling_and_duality} we see that $(L^{\#\a})^{\#\a} = \a (\a L^{\#\a})^\# = {\a \cdot \a^{-1}} (L^\#)^\# = L$, proving the lemma.
\end{proof}

\smallskip
\begin{defn} \label{def:a_modular}
We say that a bilinear lattice $L \subset (V,B)$ is {\bf $\a$-modular} for some fractional ideal $\a$ if $L^\# = \frac{1}{\a} L$.  This is equivalent to saying that the $\a$-dual lattice $L^{\#\a} = L$.
\end{defn}

\begin{lem} [Modular lattice value ideals]\label{Lem:modular_bilinear_value_ideal}
If $L \subset (V,B)$ is an $\a$-modular bilinear lattice then its bilinear value ideal $B(L,L) = \a$.
\end{lem}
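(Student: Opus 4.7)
The plan is to show both inclusions $B(L,L) \subseteq \a$ and $\a \subseteq B(L,L)$ using the two earlier lemmas on $\a$-valued lattices and on scaling of duals.

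First I would unpack the hypothesis: $L$ being $\a$-modular means $L^\# = \a^{-1} L$, which by Lemma \ref{lem:lattice_scaling_and_duality} is equivalent to $L^{\#\a} = \a L^\# = L$. Applying the preceding lemma on $\a$-valued lattices and duals (with $L \subseteq L^{\#\a} = L$ trivially), this immediately gives that $L$ is $\a$-valued, i.e.\ $B(L,L) \subseteq \a$. This is the easy direction.

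For the reverse inclusion, I would set $\b := B(L,L)$, so that $L$ is tautologically $\b$-valued. By the same lemma this means $L \subseteq L^{\#\b}$, and applying Lemma \ref{lem:lattice_scaling_and_duality} again gives $L^{\#\b} = \b L^\# = \b \a^{-1} L$. Therefore $\a \b^{-1} L \subseteq L$.

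The main step is then to deduce from $\a \b^{-1} L \subseteq L$ that $\a \b^{-1} \subseteq \O_F$ (equivalently, $\a \subseteq \b$). This is a general fact about lattices over Dedekind domains: if $\c$ is a fractional ideal of $F$ with $\c L \subseteq L$, then for any $c \in \c$ and any prime $\p$ we have $c L_\p \subseteq L_\p$, and since $L_\p$ is a free $\O_\p$-module of full rank in $V_\p$ (because $L$ has full $F$-span in $V$), this forces $v_\p(c) \geq 0$; hence $c \in \O_F$ and $\c \subseteq \O_F$. The slight obstacle here is just being explicit about this localization step — it is standard but deserves a sentence of justification rather than being invoked as folklore. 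Combining $\a \subseteq \b$ with the easy direction $\b \subseteq \a$ yields the desired equality $B(L,L) = \a$.
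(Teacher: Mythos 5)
Your proof is correct, and it takes a genuinely different route from the paper. The paper's proof of this lemma is a one-line citation: it appeals to O'Meara \textsection 82:14, where it is shown that the dual-lattice characterization of $\a$-modular used here coincides with O'Meara's own definition, which \emph{builds in} the condition $\s(L) = B(L,L) = \a$. In other words, the paper treats this as essentially definitional once the equivalence with O'Meara is invoked. You instead give a self-contained argument entirely in terms of the paper's own dual-lattice machinery: the easy inclusion $B(L,L) \subseteq \a$ drops out of $L = L^{\#\a}$ via the ``$\a$-valued lattices and duals'' lemma, and for the reverse inclusion you set $\b := B(L,L)$, use Lemma \ref{lem:lattice_scaling_and_duality} to translate $L \subseteq L^{\#\b}$ into $\a\b^{-1}L \subseteq L$, and then appeal to the standard Dedekind-domain fact (verified by localization at each prime $\p$, using that $L_\p$ is free of full rank) that a fractional ideal stabilizing a full lattice must be integral, giving $\a \subseteq \b$. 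Your version has the virtue of not requiring the reader to reconcile two different definitions of ``modular'' across sources, at the modest cost of the one auxiliary localization observation, which you correctly flag and sketch. Both arguments are sound; yours is more transparent from within the paper's own framework.
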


\begin{proof}
This follows from \cite[\S82:14, pp232]{OMeara:1971zr} since their definition of $\a$-modular on page 231 includes that the scale ideal $\mathfrak{s}(L) := B(L,L) = \a$.
\end{proof}

\begin{lem}[Scaling modular lattices] \label{Lem:lattice_scaling_and_modular_lattices}
Suppose that $\a$ and $\b$ are non-zero fractional ideals of $F$.  
If $L  \subset (V,B)$ is an $\a$-modular bilinear lattice then the scaled lattice $\b L  \subset (V,B)$ is a $(\b^2 \a)$-modular lattice.
\end{lem}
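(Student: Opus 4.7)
The plan is to verify the modularity condition $(\b L)^\# = \frac{1}{\b^2 \a}(\b L)$ directly, by unpacking both sides using Lemma \ref{lem:lattice_scaling_and_duality} together with the hypothesis $L^\# = \frac{1}{\a}L$.

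First I would compute the dual of $\b L$. By the second identity in Lemma \ref{lem:lattice_scaling_and_duality}, $(\b L)^\# = L^{\#\b^{-1}}$, and then by the first identity, $L^{\#\b^{-1}} = \b^{-1} L^\#$. Substituting the $\a$-modular hypothesis gives $(\b L)^\# = \b^{-1} \cdot \frac{1}{\a} L = \frac{1}{\b \a} L$.

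Next I would rewrite the target right-hand side: $\frac{1}{\b^2 \a}(\b L) = \frac{\b}{\b^2 \a} L = \frac{1}{\b \a} L$, matching the expression above. This yields the desired equality $(\b L)^\# = \frac{1}{\b^2 \a}(\b L)$, which is precisely the definition of $\b L$ being $(\b^2 \a)$-modular.

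Since everything reduces to two applications of Lemma \ref{lem:lattice_scaling_and_duality} and a routine ideal manipulation, there is no real obstacle; the only thing to be careful about is tracking inverse versus non-inverse ideal factors when passing scaling across the dual operation. The whole argument should fit in two or three lines of display math.
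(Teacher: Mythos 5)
Your proof is correct and follows essentially the same route as the paper's: both reduce to the two identities in Lemma \ref{lem:lattice_scaling_and_duality} together with the definition of $\a$-modularity. The only cosmetic difference is that you verify the condition in the form $(\b L)^\# = \frac{1}{\b^2\a}(\b L)$ while the paper checks the equivalent form $(\b L)^{\#\b^2\a} = \b L$; these are the two formulations given in Definition \ref{def:a_modular}, so the arguments are interchangeable.
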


\begin{proof}
By Definition \ref{def:a_modular} it is enough to show that $(\b L)^{\#\b^2 \a} = \b L$, but from 
Lemma \ref{lem:lattice_scaling_and_duality} we know that 
$(\b L)^{\#\b^2 \a} = \b^2(\b L)^{\#\a} = \b^2 L^{\#\a\b^{-1}} = \b L^{\#\a} = \b L$
since 
$L$ is $\a$-modular.
\end{proof}

\begin{rem}[Relation with O'Meara's notation]
At the request of the referee, we include some comments about how our notation relates to the notions found in O'Meara's book \cite{OMeara:1971zr}.
Our notion of an $\a$-modular lattice coincides with O'Meara's by \cite[\textsection 82:14, p232]{OMeara:1971zr}, and our bilinear value ideal $B(L,L)$ is O'Meara's scale ideal $\s(L)$ \cite[\textsection 82E, p227]{OMeara:1971zr}.  However our notion of ``scaling'' differs from the notion in \cite[\S82J, p238]{OMeara:1971zr} since given a bilinear lattice $L \subset (V,B)$
%
 O'Meara denotes by $L^\al$ the lattice $L$ in an ambient bilinear space $(V, \al B)$ whose values are scaled by $\al \in F^\times$, whereas our notion  of scaling fixes the ambient bilinear space $(V,B)$ and scales the lattice $L$ within it.
\end{rem}

\smallskip
\begin{lem}[Polarization Identity]
Given a quadratic form $Q(\x)$ in $n$ variables over a ring $R$, we can associate to it the {\bf Hessian (symmetric) bilinear form} $H(\x,\y)$ defined by the polarization identity
$$
Q(\x + \y) = Q(\x) + H(\x,\y) + Q(\y),
$$
which satisfies $H(\x,\x) = 2Q(\x)$ and also $H(\x, \y) = (\x)^t A \y$ where the matrix $A \in \mathrm{Sym}^n( R )$ is defined by 
$A := (a_{ij})$ with $a_{ij} := \frac{\partial^2 A}{\partial x_i \partial x_j}$.
\end{lem}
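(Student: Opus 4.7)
The plan is to prove the three claims (well-definedness of $H$ via polarization, the identity $H(\x,\x)=2Q(\x)$, and the matrix/Hessian description) by a direct computation, being careful not to divide by $2$ anywhere since $R$ is an arbitrary ring.

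First I would write the quadratic form in a canonical upper-triangular shape: $Q(\x)=\sum_{i\le j} c_{ij}\, x_i x_j$ with $c_{ij}\in R$. (By definition a quadratic form over $R$ in $n$ variables is a homogeneous polynomial of degree two, so such an expression exists.) Substituting $x_k\mapsto x_k+y_k$ and using the identity $(x_i+y_i)(x_j+y_j)=x_ix_j+x_iy_j+x_jy_i+y_iy_j$, I would collect the resulting expansion into three pieces: the terms with no $y$'s give $Q(\x)$, the terms with no $x$'s give $Q(\y)$, and the mixed terms define a polynomial $H(\x,\y)$ in $R[x_1,\dots,x_n,y_1,\dots,y_n]$. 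This gives the polarization identity by construction.

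Next, from the expansion of the cross terms one reads off
$$
H(\x,\y) = \sum_{i<j} c_{ij}(x_iy_j+x_jy_i) + \sum_i (2c_{ii})\,x_iy_i,
$$
which is visibly $R$-bilinear and symmetric in $\x,\y$. Setting $\y=\x$ collapses the first sum to $2\sum_{i<j} c_{ij} x_ix_j$ and the second to $2\sum_i c_{ii}x_i^2$, whose total is exactly $2Q(\x)$; this verifies $H(\x,\x)=2Q(\x)$. Writing $H(\x,\y)=\x^t A\y$ therefore forces $A=(a_{ij})$ with $a_{ii}=2c_{ii}$ and $a_{ij}=a_{ji}=c_{ij}$ for $i<j$, so $A$ is symmetric.

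Finally, to check the Hessian description, I would compute the formal partial derivatives (which make sense over any commutative ring via the usual rule on monomials): for $i<j$ one has $\partial^2 Q/\partial x_i\partial x_j = c_{ij}=a_{ij}$, while for $i=j$ one has $\partial^2 Q/\partial x_i^2 = 2c_{ii}=a_{ii}$, matching the stated matrix. The only real subtlety — and the one point worth flagging — is that the formulas $c_{ii}=\tfrac12 a_{ii}$ and $Q(\x)=\tfrac12 H(\x,\x)$ are \emph{not} available in general, so the whole argument must be arranged so that $2$ is never inverted; the symmetric presentation above accomplishes this, and that is the only delicate point in an otherwise routine calculation.
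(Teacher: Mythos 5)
Your proof is correct and follows essentially the same route as the paper: the paper's (terse) argument also writes $Q(\x)=\sum_{i\le j}c_{ij}x_ix_j$ and evaluates $Q(\x+\y)-Q(\x)-Q(\y)$ directly, noting separately the simpler presentation $Q(\x)=\x^tA\x$ when $2$ is invertible. You have simply carried out the expansion in detail, and your care to avoid ever inverting $2$ is exactly the point the paper is making.
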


\begin{proof}
This follows easily when $2$ is invertible in $R$ since we can write $Q(\x) = {\x}^t A \x$ for some $A \in \mathrm{Sym}_n(R )$, and can be verified in general by writing $Q(\x) = \sum_{i \leq j} c_{ij} x_i x_j$ with $c_{ij} \in R$ and evaluating $Q(\x + \y) - Q(\x) - Q(\y)$.
\end{proof}


\subsection{Discriminant modules}
We now describe a finite module associated to an $\a$-valued bilinear lattice $L$ whose geometry will be very useful later for constructing maximal $\a$-valued superlattices of $L$.

\begin{defn}[Discriminant module]
Suppose that $F$ is a number field, $\a$ is a non-zero fractional ideal of $F$ and $L \subset (V,B)$ is an $\a$-valued bilinear $\O_F$-lattice.  Then we define the {\bf $\a$-discriminant module of $L$} as the bilinear module $\D_\a := \D_\a(L) := L^{\#\a}/L$ of $L$ equipped with the $(F/\a)$-valued bilinear form $\widetilde{B}(\x + L, \y + L) := B(\x, \y) + \a$ induced from $B$ on $V$.
\end{defn}

\begin{lem}[Non-degeneracy]
If $L$ is a non-degenerate $\a$-valued bilinear lattice, then its (bilinear) $\a$-discriminant module $L^{\#\a}/L$ is also non-degenerate.
\end{lem}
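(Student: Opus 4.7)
The plan is to unwind the definition of non-degeneracy and reduce everything to the double-dual identity already established.

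First I would fix what ``non-degenerate'' means for $\D_\a = L^{\#\a}/L$ equipped with the $(F/\a)$-valued form $\widetilde B$: it means that the radical $\mathrm{Rad}(\D_\a) = \{\x + L \in L^{\#\a}/L \mid \widetilde B(\x+L, \y+L) = 0 + \a \text{ for all } \y \in L^{\#\a}\}$ is trivial. So I pick a representative $\x \in L^{\#\a}$ with $\x + L$ in the radical and must show $\x \in L$.

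Next, translate the radical condition back to the form $B$ on $V$. By the definition $\widetilde B(\x+L, \y+L) = B(\x,\y) + \a$, the condition $\widetilde B(\x+L,\y+L) = 0$ in $F/\a$ is literally the statement $B(\x,\y) \in \a$. So the hypothesis is $B(\x, L^{\#\a}) \subseteq \a$, which by the defining property of the $\a$-dual is exactly $\x \in (L^{\#\a})^{\#\a}$.

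Finally, I apply Lemma~\ref{Lem:double_dual}, which tells us $(L^{\#\a})^{\#\a} = L$ whenever $(V,B)$ is non-degenerate (and $L$ is a lattice, which it is by hypothesis). So $\x \in L$, hence $\x + L = 0$ in $\D_\a$, and $\mathrm{Rad}(\D_\a) = \{0\}$.

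There is no real obstacle here: the lemma is essentially a formal consequence of the definition of $\widetilde B$ together with the double-dual identity. The only thing to watch is making sure the non-degeneracy of $(V,B)$ itself is used (it is needed to invoke Lemma~\ref{Lem:double_dual}, via its reliance on the analogous statement for $L^\#$ in O'Meara and Scharlau), and that the induced form is well-defined on cosets, which is automatic because $B(L,L) \subseteq \a$ and $B(L, L^{\#\a}) \subseteq \a$ by the $\a$-valuedness of $L$ and the definition of $L^{\#\a}$.
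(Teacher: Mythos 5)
Your proposal is correct and follows the same line of reasoning as the paper's proof: take $\x$ in the radical, rewrite the condition as $B(\x, L^{\#\a}) \subseteq \a$, interpret this as $\x \in (L^{\#\a})^{\#\a}$, and conclude $\x \in L$ by the double-dual lemma. (In fact, your write-up is slightly more careful than the paper's, which has what appears to be a small typo writing $B(\v, L^{\#\a}) \in \O_F$ where $B(\v, L^{\#\a}) \subseteq \a$ is meant.)
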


\begin{proof}
Suppose $\v\in L^{\#\a}$ and $B(\v, L^{\#\a}) \in \O_F$.  Then $\v \in (L^{\#a})^{\#\a} = L$ by Lemma \ref{Lem:double_dual}, so $\v = \vec 0 \in \D_\a(L)$.
\end{proof}

\begin{lem} \label{Lem:a_discriminant_isotropy_correspondence}
Suppose that $L$ is an $\a$-valued lattice in a non-degenerate bilinear space $(V,B)$ over a number field $F$.
Then there is a bijective inclusion-preserving correspondence 
$$
\left\{
\begin{matrix} \text{$\a$-valued lattices $L'$} \\ \text{with $L \subsetneq L' \subseteq (V,B)$} \end{matrix} 
\right\}
\quad
\longleftrightarrow 
\quad
\left\{
\begin{matrix}\text{isotropic submodules} \\  \text{$L'/L \subseteq (L^{\#\a}/L, \widetilde{B})$}\end{matrix}
\right\}.
$$ 
\end{lem}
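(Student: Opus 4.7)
The plan is to construct explicit inverse maps in both directions and verify they are mutually inclusion-preserving. Given an $\a$-valued lattice $L'$ with $L \subsetneq L' \subseteq (V,B)$, I would first show the containment $L' \subseteq L^{\#\a}$: for any $\v \in L'$ we have $B(\v, L) \subseteq B(L', L') \subseteq \a$ since $L \subseteq L'$ and $L'$ is $\a$-valued, so $\v \in L^{\#\a}$ by the definition of the $\a$-dual. Thus the forward map $L' \mapsto L'/L$ is well-defined as an inclusion-preserving assignment whose image lies among the submodules of $L^{\#\a}/L$.

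Next I would check that $L'/L$ is isotropic with respect to $\widetilde{B}$. For any $\v, \w \in L'$, one has $\widetilde{B}(\v + L, \w + L) = B(\v,\w) + \a$ by definition of the induced form, and this coset is the zero element of $F/\a$ precisely because $B(L',L') \subseteq \a$ is the $\a$-valued condition on $L'$.

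For the reverse direction, given a submodule $M \subseteq L^{\#\a}/L$ on which $\widetilde{B}$ vanishes identically, I would define $L' := \pi^{-1}(M)$ where $\pi \colon L^{\#\a} \surj L^{\#\a}/L$ is the quotient map. Then $L \subseteq L' \subseteq L^{\#\a} \subseteq V$ by construction, and the isotropy of $M$ unwinds directly to $B(L',L') \subseteq \a$, so $L'$ is an $\a$-valued lattice containing $L$. The strict inclusion $L \subsetneq L'$ corresponds to the nonvanishing of $M$, so either one restricts both sides to the nonzero/strict cases uniformly or one allows equality on both sides; I would phrase this cleanly at the end of the proof.

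Finally, the identities $\pi^{-1}(L'/L) = L'$ (which uses $L \subseteq L'$) and $\pi(\pi^{-1}(M)) = M$ (which uses surjectivity of $\pi$) show the two maps are mutually inverse, and both preserve inclusions by construction. I do not expect a real obstacle here; the proof is essentially a bookkeeping exercise tying together the definition of the $\a$-dual, the induced form $\widetilde{B}$, and the $\a$-valued condition. The only mildly subtle point is the first step, which identifies $L^{\#\a}$ as the unique ``ceiling'' inside $V$ that contains every $\a$-valued superlattice of $L$, thereby allowing the correspondence to be set up at all.
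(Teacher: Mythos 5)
Your proposal is correct and follows essentially the same route as the paper: the paper also invokes the standard correspondence between $\O_F$-lattices $L'$ with $L \subseteq L' \subseteq L^{\#\a}$ and submodules of $L^{\#\a}/L$, and then observes that isotropy of $L'/L$ under $\widetilde{B}$ is precisely the $\a$-valued condition $B(L',L') \subseteq \a$. You spell out a bit more explicitly than the paper does why every $\a$-valued superlattice of $L$ necessarily lands inside $L^{\#\a}$ (the ``ceiling'' observation), which is a detail the paper leaves implicit, but it is the same argument.
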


\begin{proof}
There is an inclusion-preserving bijection between lattices $L'$ with $L \subsetneq L' \subseteq L^{\#\a}$ and non-zero submodules $L'/L$ of $L^{\#\a}/L$, and the $\a$-valued 
extra 
condition follows because 
$
\text{$L'/L$ is isotropic for $\widetilde{B}$}  \Longleftrightarrow  B(L') \in \a  
$.
\end{proof}

\begin{lem}[Sublattice discriminants] \label{Lem:sublattice_discriminants}
If $L$ is an $\a$-valued lattice in a non-degenerate bilinear space over a number field and $L'$ is a (finite index) sublattice of $L$, then 
$$
|\D_{\a}(L')| = [L:L']^2 \cdot |\D_{\a}(L)|.
$$
\end{lem}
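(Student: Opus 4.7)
The plan is to chain the inclusions $L' \subseteq L \subseteq L^{\#\a} \subseteq (L')^{\#\a}$, where the middle inclusion reflects that $L$ is $\a$-valued and the last uses that any vector pairing into $\a$ against all of $L$ does so against the smaller lattice $L'$. Multiplicativity of indices for nested full-rank $\O_F$-lattices then yields
$$
|\D_\a(L')| = [(L')^{\#\a} : L'] = [(L')^{\#\a} : L^{\#\a}] \cdot [L^{\#\a} : L] \cdot [L : L'],
$$
in which the middle factor is $|\D_\a(L)|$. Thus the lemma reduces to the duality identity $[(L')^{\#\a} : L^{\#\a}] = [L : L']$.

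To establish this identity, I would localize at each non-zero prime $\p$ of $\O_F$. Applying Lemma \ref{lem:lattice_scaling_and_duality} at $\p$ gives $L_\p^{\#\a} = \a_\p \, L_\p^{\#}$ and $(L'_\p)^{\#\a} = \a_\p \, (L'_\p)^{\#}$, so the scaling by $\a_\p$ cancels in the index and it suffices to show $[(L'_\p)^{\#} : L_\p^{\#}] = [L_\p : L'_\p]$. Since $\O_\p$ is a PID, the elementary divisor theorem provides an $\O_\p$-basis $\{e_i\}$ of $L_\p$ together with integers $a_i \geq 0$ such that $\{\pi_\p^{a_i} e_i\}$ is an $\O_\p$-basis of $L'_\p$, yielding $[L_\p : L'_\p] = \NormF(\p)^{\sum_i a_i}$. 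Writing Gram matrices $G = (B(e_i, e_j))$ and $G' = D G D$ with $D = \mathrm{diag}(\pi_\p^{a_i})$, the dual bases expressed in $\{e_i\}$-coordinates are the columns of $G^{-1}$ for $L_\p^{\#}$ and of $D^{-1} G^{-1}$ for $(L'_\p)^{\#}$, and a direct determinant comparison gives the index $\NormF(\p)^{\sum_i a_i}$, matching $[L_\p : L'_\p]$. Taking the product over all primes $\p$ gives the global identity.

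The main obstacle is the local Gram matrix bookkeeping: one must verify that the $\a_\p$ scaling really does cancel and that the elementary divisors are counted twice overall, once via the finite quotient $L/L'$ and once via $(L')^{\#\a}/L^{\#\a}$. A more conceptual alternative would be to use non-degeneracy of $B$ to produce a natural pairing $(L'_\p)^{\#\a}/L_\p^{\#\a} \times L_\p/L'_\p \to F_\p/\a_\p$ given by $(\v, \w) \mapsto B(\v,\w)$, verify it is perfect, and then invoke the standard fact that a finite $\O_\p$-module and its Pontryagin dual have equal order. Either route dispatches the lemma.
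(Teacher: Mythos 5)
Your proposal is correct and follows the same strategy as the paper: the same chain of inclusions $L' \subseteq L \subseteq L^{\#\a} \subseteq (L')^{\#\a}$, multiplicativity of indices, and the key duality identity $[(L')^{\#\a} : L^{\#\a}] = [L:L']$. The only difference is that the paper simply asserts this identity ``by the non-degeneracy of $B$,'' whereas you supply two complete proofs of it (via elementary divisors and Gram matrices locally, and via a perfect pairing into $F_\p/\a_\p$ together with Pontryagin duality), both of which are sound.
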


\begin{proof}
Since $L$ is $\a$-valued, we have the inclusions
$
L' \subseteq L \subseteq L^{\#\a} \subseteq (L')^{\#\a},
$
and by the non-degeneracy of $B$ we have that 
$
[L':L] = [(L')^{\#\a}:L^{\#\a}].
$
Therefore
$$
|\D_{\a}(L')| = [(L')^{\#\a}:L'] = [L':L]^2 \cdot [L^{\#\a}:L] = [L':L]^2 \cdot |\D_{\a}(L)|.
$$
\end{proof}

\section{Maximal Bilinear Lattices}

In this section we describe how to produce an $\a$-maximal lattice in any given non-degenerate bilinear space $(V, B)$ over a number field $F$.  
By Lemma \ref{Lem:a_discriminant_isotropy_correspondence} we can 
do this by finding a maximal isotropic submodule of 
its $\a$-discriminant module 
$\D_\a$.

\subsection{Finding Saturated discriminant modules}

We begin by performing a series of lattice operations to arrange that the $\a$-discriminant module is a product of bilinear spaces over finite fields $\F_q$.
This is a common first step in many 
algorithms to find 
maximal orders (e.g. \cite[\textsection6.1]{CohenBook:1993}, \cite[\textsection2.4.1]{CohenBook:2000} \cite[\textsection7]{Voight:2010uq}). 
While our approach uses duality on global lattices, we can more clearly see the meaning of saturation in terms of local lattices, where it is just given by scaling certain summands of the local Jordan decomposition.

\begin{defn}
Suppose $F$ is a number field and $\a$ is a non-zero fractional ideal of $F$.
We say that an $\a$-valued bilinear $\O_F$-lattice $L$ is {\bf $\a$-saturated} if for every non-zero prime ideal $\p$ of $\O_F$  the 
{\bf local $\a$-discriminant module}
$(\D_\a)_\p := \D_\a(L) \otimes_{\O_F} \O_\p$ is annihilated by $\p$.
\end{defn}

\subsubsection{Local perspective on $\a$-saturated lattices}

For perspective we include the following algorithm to construct an $\a$-saturated local lattice containing a given $\a$-valued local lattice.  The main idea is 
that scaling a bilinear lattice $L$ by an ideal $\b$ alters its values by the square of that ideal
(i.e. $B(\b L,\b L) = \b^2 B(L,L)$), so by a series of local scalings we can adjust $L$ so that its modular components are at most one valuation larger than the valuation of the desired value ideal $\a$.

\begin{alg}[Finding a Local $\a$-saturated lattice] \label{Alg:find_local_a_saturated_lattice}
Given an $\a$-valued bilinear $\O_\p$-lattice $L_\p$ in a non-degenerate bilinear space $(V, B)$ over $F_\p$, we give an algorithm for finding an $\a$-saturated superlattice of $L_\p$.
\begin{enumerate}
\item Compute a Jordan decomposition $L_\p = \oplus_{i \in \Z\geq 0} L_{i, \p}$ where the $L_{i, \p}$ are $\a\p^i$-modular  
\\ (e.g. using \cite[(2.2) and Lemma 2.1, pp354-5]{Hanke:2004br} or \cite[\textsection 94, p279-280]{OMeara:1971zr}).
\item Return $L'_\p = \oplus_{i \in \Z\geq 0} \p^{-\nu_i}L_{i,\p}$ where $\nu_i := \lfloor \tfrac{i}{2}\rfloor$.
\end{enumerate}
\end{alg}

\begin{proof}
We know that $i\geq 0$ for all non-zero $L_{i,\p}$ since for these we have $B(L_{i,\p}, L_{i,\p}) = \a\p^i \subseteq \a$ by Lemma \ref{Lem:modular_bilinear_value_ideal}.  
\end{proof}

\begin{rem} [Local-Global approach for $\a$-saturated lattice]\label{Rem:local_global_for_a_saturated_lattices}
One could use Algorithm \ref{Alg:find_local_a_saturated_lattice} to construct a global $\a$-saturated bilinear lattice by first choosing some $\a$-valued lattice $L$, computing the finite set of primes $\SS$ where $L_\p$ is not $\a$-saturated, using Algorithm \ref{Alg:find_local_a_saturated_lattice} to construct $\a$-saturated local lattices $L'_\p \supseteq L_\p$ for all $\p \in \SS$, and then using an algorithm (e.g. Algorithm \ref{alg:Local_global_algorithm_for_lattices}) to construct the unique $\O_F$-lattice $L''$ satisfying
$$
(L'')_\p = 
\begin{cases}
L'_\p & \text{if $\p\in \SS$,} \\
L_\p & \text{if $\p\notin \SS$.} 
\end{cases}
$$
\end{rem}

\bigskip
For the interested reader, we include a reference to an algorithm for performing the local-global construction with lattices, which is essentially an algorithm for Strong Approximation on $\GL_n$ over a number field.

\begin{alg}[Local-Global Algorithm for lattices] \label{alg:Local_global_algorithm_for_lattices}
Suppose that $V$ is a finite-dimensional vector space over a number field $F$.
Given (local) $\O_\p$-lattices $L'_\p \subseteq V_\p$ for all non-zero prime ideals $\p$ of $\O_F$ satisfying the compatibility condition that for all but finitely many $\p$ we have that $L'_\p = L''$ for some fixed $\O_F$-lattice $L'\subset V$, there is an algorithm for constructing a (global) $\O_F$-lattice $L\subseteq V$ so that its local lattices 
$L_\p := L \otimes_{\O_F} \O_\p$  satisfy $L_\p = L'_\p$ (as subsets of $V_\p$) for all $\p$.
\end{alg}

\begin{proof}
A constructive algorithm for this is given as the proof of \cite[\S84:14, pp218-219]{OMeara:1971zr}, which uses the algorithm in 
 \cite[\S81:11, pp214-216]{OMeara:1971zr} to normalize the presentation of two lattices in a common global vector space.
\end{proof}

\subsubsection{Global algorithms for finding an $\a$-saturated lattice}

In this section we give a global algorithm using a scaled dual lattice construction  
to simultaneously reduce the divisibility of the annihilator ideal of the $\a$-discriminant module $\D_\a$ at all primes $\p$.
By repeating this process until the annihilator of $\D_\a$ is a squarefree ideal, we obtain the desired $\a$-saturated lattice.

\begin{lem} \label{Lem:integral_scaled_dual}
If $M$ is an $(\a\p^i)$-modular bilinear lattice in a non-degenerate bilinear space $(V, B)$ over a number field $F$, then for any $\lambda \in \Z$ the scaled dual lattice $M' := \a\p^\lambda M^{\#}$ is  $(\a\p^{2\lambda - i})$-modular, and $M'$ is $\a$-valued (as a bilinear lattice) iff $\lambda \geq \lceil\tfrac{i}{2}\rceil$.
\end{lem}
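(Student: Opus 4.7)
The plan is to reduce both claims to computations involving the modular scaling lemmas already established. The key observation is that the hypothesis that $M$ is $\a\p^i$-modular lets us rewrite the scaled dual purely as a scaling of $M$: by Definition \ref{def:a_modular} we have $M^\# = (\a\p^i)^{-1} M$, and hence
$$
M' \;=\; \a\p^\lambda M^\# \;=\; \a\p^\lambda \cdot (\a\p^i)^{-1} M \;=\; \p^{\lambda - i} M.
$$
Once $M'$ is expressed this way, the modularity part is immediate from Lemma \ref{Lem:lattice_scaling_and_modular_lattices} applied with $\b := \p^{\lambda - i}$: scaling the $\a\p^i$-modular lattice $M$ by $\b$ gives a $(\b^2 \a\p^i)$-modular lattice, which is exactly $\a\p^{2\lambda - i}$-modular.

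For the second claim, I would invoke Lemma \ref{Lem:modular_bilinear_value_ideal} applied to the $\a\p^{2\lambda - i}$-modular lattice $M'$, which identifies the bilinear value ideal as $B(M',M') = \a\p^{2\lambda - i}$. Then $M'$ is $\a$-valued exactly when $\a\p^{2\lambda - i} \subseteq \a$, and cancelling the $\a$ factor (which is invertible) this is equivalent to $\p^{2\lambda - i} \subseteq \O_F$, i.e.\ $2\lambda - i \geq 0$. Since $\lambda \in \Z$, the inequality $\lambda \geq i/2$ is equivalent to $\lambda \geq \lceil i/2\rceil$, giving the stated equivalence.

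The only mild subtlety is that when $\lambda < i$ the ideal $\b = \p^{\lambda - i}$ has negative valuation at $\p$, so $\b M \subsetneq M$ rather than $\b M \supseteq M$. However, an inspection of the proof of Lemma \ref{Lem:lattice_scaling_and_modular_lattices} shows that it rests only on the duality manipulations in Lemma \ref{lem:lattice_scaling_and_duality}, which are stated and proved for arbitrary non-zero fractional ideals. So this poses no genuine obstacle, and no separate case analysis on the sign of $\lambda - i$ is needed.
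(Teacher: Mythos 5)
Your proposal is correct and follows essentially the same route as the paper: rewrite $M' = \p^{\lambda-i}M$, apply Lemma \ref{Lem:lattice_scaling_and_modular_lattices}, and then read off the value ideal to get the $\a$-valued criterion. The only cosmetic difference is that you explicitly invoke Lemma \ref{Lem:modular_bilinear_value_ideal} to identify $B(M',M')$, whereas the paper re-derives it inline; both amount to the same computation.
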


\begin{proof}
Since $M$ is an $(\a\p^i)$-modular lattice 
we have that $M^\# = \tfrac{1}{\a\p^{i}} M$ and so $M' := \p^{\lambda-i} M$.  By Lemma \ref{Lem:lattice_scaling_and_modular_lattices}, we have that $M'$ is $(\p^{2(\lambda - i)}\a\p^i)=(\a\p^{2\lambda - i})$-modular.  
This shows that the ideal 
$B(M', M') =  B(\a\p^{2\lambda - i}(M')^\#, M') = \a\p^{2\lambda - i}$
is contained in $\a$ iff $\lambda \geq \lceil\tfrac{i}{2}\rceil$.
\end{proof}

From Lemma \ref{Lem:integral_scaled_dual} we see that the scaled dual lattice $(\p^\al L_i^{\#\a}, B)$ is proper $\a$-valued superlattice of $L_i$ when $i> \al \geq \lceil\tfrac{i}{2}\rceil$, so in particular we can use this idea on $\a\p^i$-modular lattices only when $i \geq \lceil\tfrac{i }{2}\rceil$, which only happens when $i \geq 2$.
To make this idea useful for more general $\a$-valued bilinear lattices, we make the following definition.

\begin{defn}[Maximal scale index]
Given a non-zero fractional ideal $\a$ in a number field $F$, and a non-degenerate $\a\cdot \O_\p$-valued bilinear $\O_\p$-lattice $L$,
we define the {\bf maximal scale index} $m_{\p, \a}$ of $L$ at $\p$ (relative to $\a$) to be the largest integer $i$ so that the
$\a\p^{i}$-modular component of $L$ is non-zero 
(in any Jordan decomposition $L \cong \oplus_{i\in\Z} L_i$ over $\O_\p$ where the $L_i$ are $\p^i$-modular).
Since $L$ is $\a$-valued, we always have $m_{\p, \a} \geq 0$.
\end{defn}

We now explain how to construct an $\a$-saturated lattice from any $\a$-valued 
lattice.

\begin{thm} \label{Thm:a_dual_superlattice_thrm}
Suppose that 
$L$ is an $\a$-valued lattice 
in a non-degenerate bilinear space over a number field $F$, 
whose maximal scale index $m_{\p,\a} \geq 2$ for some non-zero prime ideal $\p$ of $\O_F$.  Then the bilinear lattice 
$$L' := L + \p^\lambda L^{\#\a} \qquad\text{ with }\quad  \lambda := \lceil \tfrac{m_{\p,\a}}{2}\rceil$$
 is a proper $\a$-valued superlattice of $L$, for which the maximal scale index $m'_{\p, \a} = \lambda  < m_{\p, \a}$.  
 Therefore we can construct an $\a$-valued superlattice $L''$ of $L$ with maximal scale index $m''_{\p, \a} \leq 1$ for all primes $\p$.
\end{thm}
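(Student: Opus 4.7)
The plan is to analyze the claim locally at $\p$ by diagonalizing $L_\p$ via a Jordan decomposition, so that the interaction of $L^{\#\a}$ with the ideal scaling $\p^\lambda$ reduces to simply rescaling individual modular summands. Once the local picture is understood, the global existence of $L''$ should follow by iterating the construction one prime at a time, with the output glued back by the local-global Algorithm \ref{alg:Local_global_algorithm_for_lattices} when necessary.

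For the local step at $\p$, I would first fix a Jordan decomposition $L_\p = \bigoplus_{i \geq 0} L_{i,\p}$ with each $L_{i,\p}$ being $\a\p^i$-modular. Definition \ref{def:a_modular} together with Lemma \ref{lem:lattice_scaling_and_duality} then gives $L_{i,\p}^{\#\a} = \a L_{i,\p}^\# = \p^{-i} L_{i,\p}$, so
$$
(\p^\lambda L^{\#\a})_\p \;=\; \bigoplus_{i \geq 0} \p^{\,\lambda - i} L_{i,\p}.
$$
Adding $L_\p$ and splitting on whether $i \leq \lambda$ or $i > \lambda$ yields
$$
L'_\p \;=\; \bigoplus_{i \leq \lambda} L_{i,\p} \;\oplus\; \bigoplus_{\lambda < i \leq m_{\p,\a}} \p^{\,\lambda - i} L_{i,\p}.
$$
By Lemma \ref{Lem:lattice_scaling_and_modular_lattices} each rescaled summand $\p^{\,\lambda - i} L_{i,\p}$ is $\a\p^{\,2\lambda - i}$-modular, and the choice $\lambda = \lceil m_{\p,\a}/2\rceil$ forces $2\lambda - i \geq 0$ for all $i \leq m_{\p,\a}$, so these summands remain $\a$-valued; the unchanged $i \leq \lambda$ summands already are. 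This simultaneously establishes that $L'_\p$ is $\a$-valued and bounds its new maximal scale index by $\lambda$, which satisfies $\lambda < m_{\p,\a}$ because $m_{\p,\a} \geq 2$ implies $\lceil m_{\p,\a}/2 \rceil < m_{\p,\a}$. Properness is immediate from the fact that the $i = m_{\p,\a}$ summand is strictly enlarged.

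For the final claim, I would iterate on the (finite) set of primes where $m_{\p,\a} \geq 2$, which is finite because $\D_\a(L)$ is a finite module. Since each application strictly decreases $\sum_\p m_{\p,\a}$, the process terminates with the desired $L''$. The main obstacle I anticipate is that, interpreted globally, $(\p^\lambda L^{\#\a})_\q = L^{\#\a}_\q$ for every prime $\q \neq \p$, so if $m_{\q,\a} > 0$ at such a $\q$ the global sum silently picks up vectors that are not $\a$-valued at $\q$. The cleanest way I see around this is to carry out the step as a strictly local modification at $\p$ while keeping $L_\q$ unchanged for $\q \neq \p$, using Algorithm \ref{alg:Local_global_algorithm_for_lattices} to patch the resulting local lattices back into a global $\a$-valued superlattice; equivalently, one orders the iteration so that $L$ is already $\a$-saturated at every prime other than $\p$ whenever the global formula is invoked.
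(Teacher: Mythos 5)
Your argument is essentially identical to the paper's: both reduce to a Jordan decomposition of $L_\p$ into $\a\p^i$-modular summands, observe that $\p^\lambda L^{\#\a}$ rescales the $i$-th summand to an $\a\p^{2\lambda-i}$-modular one, and read off that the new maximal scale index at $\p$ drops to $\lambda < m_{\p,\a}$. (The paper routes this through Lemma~\ref{Lem:integral_scaled_dual}; you invoke Lemma~\ref{Lem:lattice_scaling_and_modular_lattices} directly, but that is a cosmetic difference since Lemma~\ref{Lem:integral_scaled_dual} is proved from it.) One small refinement: you correctly claim only a bound $m'_{\p,\a}\le\lambda$, whereas the paper asserts equality; equality can fail when $L_\lambda = 0$ and no unchanged summand attains index $\lambda$, but only the inequality is needed. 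Your closing caveat about the global formula is also a sound observation: since $\ord_\q(\p^\lambda)=0$ for $\q\ne\p$, the literal sum $L+\p^\lambda L^{\#\a}$ equals $L^{\#\a}_\q$ at every other prime, which is not $\a$-valued whenever $m_{\q,\a}\ge 1$. The paper's Algorithm~\ref{Alg:find_saturated_a_valued_lattice} already sidesteps this by taking $\l=\prod_\p\p^{\lambda_\p}$ and modifying all offending primes simultaneously; your alternative (a purely local modification glued by Algorithm~\ref{alg:Local_global_algorithm_for_lattices}) works equally well and makes the local nature of the proof of Theorem~\ref{Thm:a_dual_superlattice_thrm} more explicit.
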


\begin{proof}
Let $\al = \ord_\p(\a)$ and let $L \cong \oplus_{i\in\Z} L_i$ be a Jordan decomposition of $L$ over $\O_\p$ where the $L_i$ are $\a\p^i$-modular lattices.  By assumption we know that 
$L_i = \{0\}$ unless $0 \leq i \leq m_{\p, \a}$.  Also since duality and scaling both preserve the Jordan decompositon we can compute the lattice $L'$ in each Jordan summand.
Suppose that $M$ is a $(\p^{\al + i})$-modular Jordan summand of $L$.  Then by Lemma \ref{Lem:integral_scaled_dual} the associated scaled dual lattice $\p^\lambda M^{\#\a}=\a\p^\lambda M^{\#}$ is $(\p^{2\lambda + \al - i}) = (\p^{\al + i + 2(\lambda - i)})$-modular, so the sum $M' := M + \a\p^\lambda M^{\#}$ is $\p^{\al + i + \min\{0, 2(\lambda - i)\}}$-modular.

We now compute the maximal scale index $m'_{\p, \a}$ for $L'$ by varying over all possible Jordan summands $M$.  When $i \leq \lambda$ we have $M' = M$ which is $\p^{\al + i}$-modular with largest power $\al + \lambda$, but when $i > \lambda$ we have that $M'$ is $\p^{\al + 2\lambda - i}$-modular with the largest power here being 
$\al + \lambda - 1$, giving  $m'_{\p, \a} = \lambda$.  By repeatedly applying this procedure, we obtain a decreasing sequence of $m'_{\p, \a}$ until $m'_{\p, \a}\leq 1$ (since $\lceil\frac{x}{2}\rceil < x$ when $x \in \Z \geq 2$).
\end{proof}

\begin{alg}[Construct an $\a$-saturated lattice] \label{Alg:find_saturated_a_valued_lattice}
Given a non-degenerate bilinear space $(V,B)$ over a number field $F$ and a non-zero fractional ideal $\a$ of $F$, we give an algorithm for finding a saturated $\a$-valued bilinear $\O_F$-lattice $L$ in $(V,B)$.
\begin{enumerate}
\item  Choose an arbitrary $\O_F$-lattice $L \subset (V, B)$ by repeatedly choosing vectors $\v_k$ not in the $F$-span of the previous vectors $\{\v_1, \dots, \v_{k-1}\}$ until their $F$-span is $V$.  Then let $L := \Span_{\O_F}\{\v_1, \dots, \v_{n}\}$, where $n = \dim_F(V)$.
\item Let $\b :=B(L,L)$.  If $\b \nsubseteq \a$ then replace $L$ by $\c L$ where $\c$ is any non-zero fractional ideal satisfying $\c^2\b \subseteq \a$. 
(For convenience we often we take $\c$ to be principal.) 
\item \label{Item:Saturation} 
If $L$ is saturated then return $L$.  Otherwise, let $\l$ be the integral ideal $\l:= \prod_{\p} \p^{\lambda_\p}$ where $\lambda_\p := \lceil \tfrac{m_{\p, \a}}{2} \rceil$ and let $L' := L + \l L^{\#\a}$.
\item Replace $L$ by $L'$ and repeat step \ref{Item:Saturation}.  
\end{enumerate}
This algorithm terminates by Theorem \ref{Thm:a_dual_superlattice_thrm}, since there are only finitely many primes $\p$ where $m_{\p, \a} \neq 0$.
\end{alg}

\subsection{Finding a Maximal Isotropic subspace}

To pass from an $\a$-saturated bilinear lattice to an $\a$-maximal lattice, we must be able to compute a maximal isotropic subspace of a bilinear space over a finite field.  In this section we describe how to do this, with special attention to when the characteristic of the base field is two.

\begin{rem}[Bilinear and semilinear quadratic forms in characteristic two]  \label{rem:bilinear_and_quadratic_in_char_2}
Given a non-degenerate bilinear space $(V,B)$ over a field $K$ of characteristic two, the polarization identity ensures that the associated  quadratic form $Q_B(\x) := B(\x, \x)$ is a homomorphism of additive groups (since $2B(\x+\y, \x+y) = 0$).  In general $Q_B$ is not a $K$-linear map since $Q_B(\al \x) = \al^2 Q_B(\x) \neq \al Q_B(\x)$.  However when $K$ is a perfect field (e.g. $K$ is a finite field) we know that squaring 
is a field automorphism $\sigma$ of $K$ (called the Frobenius automorphism), so the quadratic scaling property can be rewritten as 
$Q_B(\al\x) = \sigma(\al) \cdot Q_B(\x)$, which says that the map $Q_B$ is a ``semilinear" map (see \cite[\S4.1, p28]{OMeara_Lectures_on_linear_groups}).  Therefore
$Q_B$ ``behaves like'' a $K$-linear map in that the image and pre-image of any $K$-subspace are again $K$-vectorspaces.  We also have that $\dim_K(\Im(Q_B)(V)) + \dim_K(\Ker(Q_B)(V)) = \dim_K(V)$.  This observation will be extremely useful for understanding the concept of isotropy in these bilinear spaces.
\end{rem}

\medskip
We begin by recalling a well-known test (at least in odd characteristic) for the existence of a (non-zero) isotropic vector in a non-degenerate  quadratic/bilinear space over a finite field.

\begin{lem}[Isotropy testing over $\F_q$] \label{lem:Check_isotropy_over_F_q}
Given a non-degenerate bilinear space (V,B) over a finite field $\F_q$, we can use the following criteria to determine 
if $(V,B)$ is isotropic (i.e. has a non-zero vector $\v\in V$ with $B(\v, \v) = 0$):
\begin{enumerate}
\item If $\dim(V) \geq 3$ then $(V,B)$ is isotropic.
\item If $\dim(V) = 2$ 
then 
$$\text{$(V,B)$ is isotropic $\iff \det(B) = -1(\F_q^\times)^2$.}$$
When  $\Char(\F_q) = 2$ then $(V,B)$ is isotropic.
\item If $\dim(V) = 1$ then $(V,B)$ is not isotropic.
\end{enumerate}
\end{lem}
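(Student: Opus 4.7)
The plan is to handle the three dimension ranges separately, with an additional split on whether $\Char(\F_q) = 2$. The $\dim V = 1$ case is essentially immediate from non-degeneracy. For $\dim V \geq 2$ in odd residue characteristic, I will use the standard fact that $Q_B(\x) := B(\x,\x)$ is a genuine quadratic form and invoke either the hyperbolic-plane classification (dimension $2$) or Chevalley--Warning (dimension $\geq 3$). In characteristic $2$, where $Q_B$ fails to be an ordinary quadratic form, I will replace this by the $\sigma$-semilinearity observation of Remark \ref{rem:bilinear_and_quadratic_in_char_2}, which handles both remaining dimensions uniformly.

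\textbf{Dimension 1.} If $V = \F_q \v_0$, then non-degeneracy of $B$ forces $d := B(\v_0,\v_0) \neq 0$ (otherwise $\v_0 \in \Rad(V)$). Then for any nonzero $\al \in \F_q$ we have $B(\al\v_0, \al\v_0) = \al^2 d \neq 0$, so $(V,B)$ is anisotropic as claimed.

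\textbf{Dimension 2, odd characteristic.} I would use the classical argument: if $\v \in V$ is isotropic, non-degeneracy produces $\w_0 \in V$ with $B(\v,\w_0) \neq 0$; after rescaling $\w_0$ and subtracting a suitable multiple of $\v$ (using that $2$ is a unit), one obtains a hyperbolic pair. Thus every isotropic non-degenerate $(V,B)$ of dimension $2$ is isometric to the hyperbolic plane, whose Gram matrix has determinant $-1$. Since $\det(B) \bmod (\F_q^\times)^2$ is an isometry invariant (change of basis multiplies the determinant by the square of the change-of-basis determinant) and since there are only two square classes in $\F_q^\times$, isotropy is equivalent to $\det(B) \equiv -1 \bmod (\F_q^\times)^2$. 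Conversely, if the discriminant is $-1$ mod squares, diagonalizing $(V,B)$ in the form $\langle a, -a \rangle$ makes an isotropic vector explicit.

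\textbf{Dimension $\geq 3$ in odd characteristic, and all of characteristic $2$.} For odd $q$ and $n := \dim V \geq 3$, Chevalley--Warning applied to the degree-$2$ polynomial $Q_B(x_1,\dots,x_n)$ (with $n > \deg Q_B$ and the trivial zero at $\vec 0$) produces a non-trivial zero, giving an isotropic vector. The characteristic $2$ case (for both $\dim V = 2$ and $\dim V \geq 3$) follows from Remark \ref{rem:bilinear_and_quadratic_in_char_2}: there $Q_B$ is additive and satisfies $Q_B(\al \x) = \sigma(\al)Q_B(\x)$, so $\Ker(Q_B)$ is an $\F_q$-subspace of $V$ and $\Im(Q_B)$ is an $\F_q$-subspace (via the twist by $\sigma$) of $\F_q$ of dimension $\leq 1$. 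Semi-linear rank-nullity then gives $\dim_{\F_q}\Ker(Q_B) \geq n - 1 \geq 1$, producing the required isotropic vector and also showing that the stated discriminant condition is automatically satisfied (every element of $\F_q^\times$ is a square, since $\sigma$ is an automorphism). I expect the only real subtlety to be the careful verification of semi-linear rank-nullity in the char $2$ step; once that is set up, the claim is uniform across the remaining dimensions.
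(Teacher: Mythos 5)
Your proposal is correct and, in the dimension-$1$ and characteristic-$2$ cases, follows essentially the same route as the paper (the paper also argues via the semilinearity of $Q_B$ from Remark \ref{rem:bilinear_and_quadratic_in_char_2}, observing that $\Ker(Q_B)$ has $\F_q$-codimension at most $1$). For dimension $2$ in odd characteristic you and the paper both rest on the classification of non-degenerate binary forms over $\F_q$ into the hyperbolic plane and the anisotropic norm form, distinguished by discriminant; your spelled-out hyperbolic-pair construction and discriminant invariance argument are just the proof of what the paper simply cites from O'Meara \textsection62:1. The one genuine difference is the case $\dim V \geq 3$ in odd characteristic: the paper cites O'Meara 62:1b, whereas you invoke Chevalley--Warning on the degree-$2$ polynomial $Q_B$ in $n>2$ variables. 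That substitution is a nice, self-contained elementary alternative; the trade-off is that O'Meara's classification result gives more (it pins down the Witt decomposition over $\F_q$), while Chevalley--Warning gives exactly the existence of a non-trivial zero, which is all the lemma needs. Both are correct.
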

\begin{proof} First suppose that  $\Char(\F_q) \neq 2$.  Then when $n \geq 3$ this is \cite[62:1b, p158]{OMeara:1971zr}.  When $n=2$ then by \cite[\S62:1, p157]{OMeara:1971zr} there are exactly two non-degenerate quadratic forms over $\F_q$, the anisotropic norm form $N_{\F_q^2/\F_q}(\x)$ and the (isotropic) hyperbolic plane.

Now suppose that $\Char(\F_q) = 2$.  By Remark \ref{rem:bilinear_and_quadratic_in_char_2} we see that the kernel of the map $Q_B(\x) := B(\x, \x)$
is an $\F_q$-subspace of dimension $\geq \dim(V) - 1$.  Therefore when $\dim(V) \geq 2$ we have that $K \neq \{\vec 0\}$ and so $(V, B)$ is isotropic.  

Finally, when $n=1$ then $B(x, x) = \al x^2$ with $\al \neq 0$ iff $(V, B)$ is isotropic.  
\end{proof}

Once we determine that a bilinear space over $\F_q$ is isotropic,
we need a way to find isotropic vectors in it.   Over a finite field there are only finitely many lines, so an exhaustive search through all such lines is possible.  We give a somewhat better algorithm that essentially checks if a random (rational) line intersects the projective quadric hypersurface $Q_B(\x) =0$.  Here the probability of success (for each attempt) is roughly $50\%$ since by the quadratic formula the existence of an intersection is governed by whether the discriminant of the associated quadratic polynomial is a square in $\F_q$.

\begin{alg}[Finding an isotropic vector over $\F_q$] \label{alg:find_isotropic_vector}
Given an isotropic non-degenerate bilinear space $(V,B)$ over $\F_q$, we give a randomized algorithm for finding some non-zero $\v \in V$ so that $Q_B(\v) :=B(\v,\v)= 0$.
\begin{enumerate}
\item  Randomly choose two linearly independent vectors $\vec a, \vec m \in V$.
\item  Compute the intersection of the line $L := \{\vec a + t\vec m \mid t \in \F_q\} \subseteq V$ with $Q_B(\x)=0$ by solving $Q_B(\vec a + t\vec m) = 0$ for $t$ over $\F_q$.
\item If a solution $t_0 \in \F_q$ exists, then the vector $\v := \vec a + t_0 \vec m$ is isotropic.  Otherwise repeat from step 1.
\end{enumerate}
\end{alg}

\begin{proof}
Since $\vec a$ and $\vec m$ are linearly independent, we know that the affine line $L$ descends to a line in the projective space $\P(V)$, and that any vector $\v \in L$ is non-zero.
\end{proof}

We can now easily find a maximal isotropic subspace in any bilinear space over a finite field of characteristic $\neq 2$, since there any (non-zero) isotropic vector is contained within a hyperbolic plane.

\begin{alg}[Isotropic vector $\Rightarrow$ Hyperbolic plane] \label{alg:isotropic_gives_hyperbolic_plane}
Given a non-degenerate bilinear space $(V, B)$ and a (non-zero) isotropic vector $\v \in V$,  we give an algorithm to find some $\w \in V$ so that the subspace of $V$ spanned by the basis $\B = \{\v, \w\}$ is a hyperbolic plane.
\begin{enumerate}
\item Choose a basis $\B$ for $V$ whose first basis vector is $\v$.
\item Find some $\w \in \B$ so that $B(\v, \w) = 0$.
\item Scale $\w$ so that $B(\v,\w) =1$  (i.e. replace $\w$ by $\frac{\w}{B(\v,\w)}$).
\item Shear $\w$ by $\v$ to arrange that $B(\w, \w) = 0$ (i.e. replace $\w$ by $\w - B(\v,\w)\cdot \v$).
\end{enumerate}
\end{alg}

\begin{proof}
Since $\v \neq \vec0$ we can extend $\{\v\}$ to a basis $\B = \{\v_1, \dots, \v_n\}$ with $\v_1 = \v$ in Step 1.  If $B(\v, \v_i) = 0$ for all $i \geq 2$ then by linearity we have $B(\v, V) = \{0\}$ and so $\v \in \Rad(V) = \{\vec 0\}$, which cannot happen since $\v \neq \vec0$.  Therefore some $B(\v, \v_i) =0$ in Step 2.  The remaining steps follow by linearity.
\end{proof}

\begin{alg}[Maximal Totally Isotropic subspaces in odd characteristic] \label{alg:maximal_totally_iso_in_char_not_2}
Suppose that $(V,B)$ is a 
bilinear space over a finite field of characteristic $\neq 2$.  Then we give an algorithm to find an orthogonal decomposition of $V$ as $V = R \perp H \perp A$ where $R := \Rad(V)$, $H$ is a hyperbolic space, and $A$ is anisotropic.
\begin{enumerate}
\item Compute the radical subspace $R$ of $V$, and find a complementary subspace $V'$ of $R$ in $V$.  Then $(V', B\mid_{V'})$ is a non-degenerate bilinear space.
\item Use Lemma \ref{lem:Check_isotropy_over_F_q} and Algorithms \ref{alg:find_isotropic_vector} and \ref{alg:isotropic_gives_hyperbolic_plane} to repeatedly split off hyperbolic planes from $V'$ until the remaining bilinear space is anisotropic (and take this as $A$).
\end{enumerate}
Further, by writing the hyperbolic space $H$ as a (non-orthogonal) direct sum of two Lagrangian subspaces $H = L \oplus L'$ (which is done implicitly in Step 2), we have that the subspace $M := R + L$ is a maximal totally isotropic subspace of $(V,B)$.
\end{alg}

\begin{proof}
Certainly $M$ is a totally isotropic subspace since for any $\vr \in R$ and $\vl \in L$ we have 
$B(\vr + \vl, \vr + \vl) = B(\vl, \vl) = 0$.  To see maximality, suppose we have some totally isotropic subspace $M'$ of $V$ with $M'\supseteq M$ and write any $\v' \in M'$ as $\v = \vr + \vl + \vl' + \va$ according to the decomposition above.  Then we must have $\vl'=\vec 0$ since otherwise we can find some $\vl_0 \in L$ so that $B(\v, \vl_0) = B(\vl, \vl_0) \neq 0$.  Similarly $B(\v, \v) = B(\va,\va) = 0 \implies \va = \vec 0$ since $A$ is anisotropic.  This shows that $\v \in M$, so $M' \subseteq M$ and $M$ is a maximal totally isotropic subspace of $V$.
\end{proof}

Over finite fields of characteristic two several new phenomena arise, including the failure of the Algorithm \ref{alg:isotropic_gives_hyperbolic_plane} due to the presence of ``metabolic" bilinear spaces (e.g. $B(\x, \y) = x_1y_2 + x_2 y_1 + x_2y_2$) that are isotropic but not hyperbolic.  However to compensate for this complication, we gain the simplification that quadratic forms behave like linear forms in this setting (see Remark \ref{rem:bilinear_and_quadratic_in_char_2}).  With this in mind, we look for a maximal totally isotropic subspace of any bilinear space over a finite field of characteristic two.

\begin{alg}[Maximal Totally Isotropic subspaces in characteristic two] \label{alg:maximal_totally_iso_in_char_2}
Suppose that $(V,B)$ is a 
bilinear space over a finite field $K$ of characteristic two.  Then we give an algorithm to find an orthogonal decomposition of $V$ as $V = R \perp (I \oplus A)$ where $R := \Rad(V)$, $I$ is a totally isotropic space, and $A$ is anisotropic with $\dim(A)\leq 1$.  
For convenience we define $Q_B(\x) := B(\x,\x)$.
\begin{enumerate}
\item Compute the radical subspace $R$ of $V$, and find a complementary subspace $V'$ of $R$ in $V$.  Then $(V', B\mid_{V'})$ is a non-degenerate bilinear space.
\item Choose a basis $\B = \{\v_1, \dots, \v_n\}$ for  $V'$, ordered so that $Q_B(\v_n) \neq 0$ if some $Q_B(\v_i) \neq 0$.
\item If $Q_B(\v_n) \neq 0$ then by shearing the $\v_i$ by $\v_n$ we can arrange that $Q_B(\v_i) = 0$ for all $1 \leq i < n$  (i.e. replace $\v_i$ by $\v_i - \sqrt{\frac{Q_B(\v_i)}{Q_B(\v_n)}} \v_n$).
\item If $Q_B(\v_n) = 0$ then set $I:=\Span_K(\{\v_1, \dots, \v_n\})$ and $A:= \{\vec 0\}$,  
otherwise set $I:=\Span_K(\{\v_1, \dots, \v_{n-1}\})$ and $A:= \Span_K(\{\v_n\})$.
\end{enumerate}
Given this decomposition, we have that the
subspace $M := R + I$ is a maximal isotropic subspace of $(V,B)$. 
\end{alg}

\begin{proof}
We know that $M$ is a totally isotropic subspace since for any $\vr \in R$ and $\vi \in I$ we have 
$B(\vr + \vi, \vr + \vi) = B(\vi, \vi) = 0$.  To see maximality, suppose we have some totally isotropic subspace $M'$ of $V$ with $M'\supseteq M$ and write any $\v' \in M'$ as $\v = \vr + \vi + \va$ according to the decomposition above.  Then $0 = Q_B(\vr + \vi + \va) = Q_B(\va)$ gives $\va = \vec 0$ and so $\v\in M$.  Therefore $M' \subseteq M$ and $M$ is a maximal totally isotropic subspace of $V$.
\end{proof}

\begin{rem}
It is interesting to note that the characteristic two Algorithm \ref{alg:maximal_totally_iso_in_char_2} is both stronger and weaker than its odd characteristic counterpart (Algorithm \ref{alg:maximal_totally_iso_in_char_not_2}).  It is stronger in that we more easily see the totally isotropic subspace and $\dim(A) \leq 1$ (instead of $\leq2$), but it is weaker in that we have concluded much less about the underlying bilinear space under the decomposition.  This illustrates the general phenomenon that quadratic forms (i.e. questions about isotropy) and symmetric bilinear forms (i.e. questions about orthogonality) are distinct notions in characteristic two.
\end{rem}

\subsection{Finding a Maximal bilinear lattice}

Now that we can find a maximal isotropic subspace of $\a$-discriminant module of a saturated $\a$-valued bilinear lattice $L$, we are ready to compute a maximal $\a$-valued bilinear superlattice of $L$.

\begin{alg}[Construct a maximal $\a$-valued bilinear lattice]  \label{alg:construct_maximal_bilinear_lattice}
Given a non-degenerate bilinear space $(V,B)$ over a number field $F$ and a non-zero fractional ideal $\a$ of $F$, we give an algorithm for finding a maximal $\a$-valued bilinear $\O_F$-lattice $L$ in $(V,B)$.
\begin{enumerate}
\item Use Algorithm \ref{Alg:find_saturated_a_valued_lattice} 
(or Algorithm \ref{Alg:find_local_a_saturated_lattice} and Remark \ref{Rem:local_global_for_a_saturated_lattices}) 
to find an $\a$-saturated lattice in $(V,B)$.
Let $\SS$ denote the finite set of (non-zero) primes $\p$ of $\O_F$ where the local discriminant module $(\D_\a(L))_\p \neq \{0\}$.  
\item For each $\p\in \SS$, use Algorithm \ref{alg:maximal_totally_iso_in_char_not_2} or \ref{alg:maximal_totally_iso_in_char_2} to find a maximal isotropic subspace $I_\p$ of the non-zero $\F_\p$-vector space $(\D_\a(L))_\p$. 
\item For each $\p\in \SS$, choose a lift of a basis $\B_\p$ for $I_\p$ to a set $\B'_\p \subseteq L^{\#\a}$ of vectors so that for every $\q\in\SS$
their $\F_\q$-span in $(\D_\a(L))_\q$ satisfies 
$$
\Span_{\F_\q}(\B'_\p) = 
\begin{cases}
I_\p & \text{if $\q= \p$,} \\
\{0\} & \text{if $\q\neq \p$.}
\end{cases}
$$
This can be done with a constructive version of the Chinese Remainder Theorem. 
\item Then the lattice $L' := L + \Span_{\O_F} (\cup_{\p\in \SS} \B'_\p)$ is a maximal $\a$-valued bilinear $\O_F$-lattice containing $L$.
\end{enumerate}
\end{alg}

\begin{proof}
Since $L^{\#\a} \supseteq L' \supseteq L$ and $L'/L \subseteq (\D_\a)_\p$ is a maximal isotropic submodule, the correspondence in Lemma \ref{Lem:a_discriminant_isotropy_correspondence} ensures that $L'$ is a maximal $\a$-valued lattice in $(V,B)$.  
\end{proof}

We conclude with a very general lemma about the uniqueness of discriminant modules of maximal $\a$-valued bilinear lattices which will be useful when thinking about maximal quadratic lattices.  This proof
was explained to the author by Prof. Gabrielle Nebe.

\begin{lem}[Uniqueness of Maximal Lattice Discriminant modules]  \label{lem:uniqueness_of_maximal_discriminant_modules}
Suppose that $L_1$ and $L_2$ are maximal $\a$-valued lattices in a non-degenerate bilinear space $(V,B)$ over a field $F$.  Then the discriminant modules $\D_\a(L_1) \cong \D_\a(L_2)$ as $(F/\a)$-valued bilinear $\O_F$-modules.
\end{lem}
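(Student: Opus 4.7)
The plan is to reduce the statement to a local question via the intersection lattice $M := L_1 \cap L_2$. Applying Lemma \ref{Lem:a_discriminant_isotropy_correspondence} to $M$, the inclusions $M \subseteq L_i$ correspond to isotropic submodules $N_i := L_i/M$ of $(\D_\a(M), \widetilde B)$, and the maximality of each $L_i$ as an $\a$-valued lattice in $V$ forces $N_i$ to be a maximal isotropic submodule of $\D_\a(M)$. Because $M \subseteq L_i$ implies the reverse inclusion $L_i^{\#\a} \subseteq M^{\#\a}$ under $\a$-duality, the image of $L_i^{\#\a}$ in $\D_\a(M) = M^{\#\a}/M$ is precisely the orthogonal complement $N_i^\perp$ (as one checks directly from the definitions of $L_i^{\#\a}$ and $\perp$). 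Hence
\[
\D_\a(L_i) \;=\; L_i^{\#\a}/L_i \;\cong\; N_i^\perp / N_i
\]
as $(F/\a)$-valued bilinear $\O_F$-modules.

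The lemma is therefore equivalent to a Witt-type claim: in any non-degenerate finite bilinear $(F/\a)$-valued $\O_F$-module $(D, \widetilde B)$, the quotient bilinear module $N^\perp/N$ is independent (up to bilinear isomorphism) of the choice of maximal isotropic submodule $N \subseteq D$. Because $D := \D_\a(M)$ is a finite $\O_F$-torsion module, it splits canonically as an orthogonal direct sum $D = \bigoplus_\p D_\p$ of its $\p$-primary components (orthogonality holds because $\widetilde B$-pairings between a $\p$-primary and $\q$-primary element are simultaneously annihilated by $\p^a$ and $\q^b$, and thus by $\p^a + \q^b = \O_F$). Every submodule of $D$ respects this decomposition, so the Witt-type claim reduces to the analogous statement for each $D_\p$ separately, and hence to a purely local problem over $\O_\p$.

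For the local claim I would invoke the local theory of maximal bilinear lattices: both $(L_1)_\p$ and $(L_2)_\p$ are maximal $\a_\p$-valued lattices in the same local bilinear space $V_\p$, and by a Jordan-decomposition argument the modular components of such a local maximum are determined up to isometry by $(V_\p, B_\p)$ and $\a_\p$ alone. Any local isometry $(L_1)_\p \to (L_2)_\p$ then induces a bilinear isomorphism of local discriminant modules $\D_\a(L_1)_\p \cong \D_\a(L_2)_\p$, and assembling these over the finite set of primes supporting the discriminant modules produces the desired global isomorphism. The main obstacle is establishing the uniqueness of maximal $\a_\p$-valued bilinear lattices at dyadic primes $\p \mid 2$: the Jordan decomposition admits more varied modular components there (as reflected in the paper's separate treatment of the characteristic-two case in Algorithm \ref{alg:maximal_totally_iso_in_char_2}), and one must check that the local bilinear isometry class of a maximal lattice, and hence of its discriminant module, is genuinely independent of choices.
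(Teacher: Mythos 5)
Your setup coincides with the paper's: both reduce to the intersection lattice $M := L_1 \cap L_2$, identify $N_i := L_i/M$ as maximal isotropic submodules of $\D_\a(M)$, observe that the image of $L_i^{\#\a}$ in $\D_\a(M)$ is $N_i^\perp$, and hence rewrite $\D_\a(L_i) \cong N_i^\perp/N_i$. At that point, however, the two arguments diverge, and yours has a genuine gap that you yourself flag but do not close.

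You propose to prove the Witt-type claim ``$N^\perp/N$ is independent of the choice of maximal isotropic $N$'' by splitting into $\p$-primary components and then invoking uniqueness, up to local isometry, of maximal $\a_\p$-valued bilinear lattices on $V_\p$. This is a strictly stronger and considerably more delicate statement than the lemma being proven (it is essentially the one-class-per-genus property for maximal lattices, and at dyadic primes requires a careful case analysis that this paper does not carry out). So the argument as written replaces the target lemma by a harder local classification and then leaves that classification open. By contrast, the paper's proof (due to Nebe) stays entirely at the level of the finite bilinear torsion modules: it notes that $\D_\a(L_1) \perp -\D_\a(L_2)$ is weakly metabolic, picks a Lagrangian $N$, uses the exact sequence $0 \to N^\perp \to \D_1 \perp -\D_2 \to N^* \to 0$ to get $|\D_1|\,|\D_2| = |N|^2$, shows the two projections $\pi_i : N \to \D_i$ are injective (kernel lands in $\D_{i'} \cap N = \{0\}$) hence bijective by counting, and then verifies directly that $\pi_2 \circ \pi_1^{-1}$ is a bilinear isomorphism. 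No local lattice theory is needed. If you want to complete your route, you would need to actually supply the local uniqueness of maximal lattices at all primes including $\p \mid 2$; the paper's construction avoids that entirely.
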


\begin{proof}
We first notice that the discriminant modules $\D_\a(L_1) \cong \D_\a(L_1\cap L_2) \cong \D_\a(L_2)$ in the Witt group of bilinear torsion $\O_F$-modules (i.e. up to weakly metabolic summands) since the submodules $N_i := L_i / (L_1 \cap L_2)$ of $\D_\a(L_1\cap L_2)$ are isotropic with $(N_i)^\perp = L_i^{\#\a} / (L_1 \cap L_2)$, so by \cite[Lemma 1.4]{Scharlau_book} we know that the orthogonal sums $\D_\a(L_i) \perp -\D_\a(L_1\cap L_2)$ are weakly metabolic.

For convenience, let $\D_i := \D_\a(L_i)$.  
Since the sum 
$\D_1 \perp -\D_2$
is weakly metabolic, we know that it has some submodule $N$ with $N = N^\perp$ and further that $|\D_1|\cdot|\D_2| = |N| \cdot|N^\perp| = |N|^2$
by comparing cardinalities in the exact sequence of torsion $\O_F$-modules 
$$
0 \longrightarrow N^\perp \longrightarrow (\D_1 \perp -\D_2) \xrightarrow{B(\cdot, (x_1, -x_2))} N^* := \Hom_{\O_F}(N, F/\O_F) \longrightarrow 0.
$$
We also know that the projections $\pi_i:N\rightarrow \D_i$ are injective since the kernel lies in $\D_{i'} \cap N = \{0\}$ where $i \neq i'$, so by the formula $|\D_1|\cdot|\D_2| = |N|^2$ we see that the $\pi_i$ are also surjective.  To see that $\pi_2 \circ \pi_1^{-1}: \D_1 \rightarrow -\D_2$ is an isomorphism of bilinear modules, suppose that it maps $x \mapsto y$ and $x' \mapsto y'$.  Then since $x+y, x' + y' \in N$, we have that $0 = B(x+x', y+y') = B(x,x') + B(y,y')$, giving $B(x,x') = -B(y,y')$, which proves the lemma.
\end{proof}

\section{Maximal Quadratic Lattices}

One application of Algorithm \ref{alg:construct_maximal_bilinear_lattice}
is to help with finding a maximal $\a$-valued quadratic lattice (in a 
quadratic space) over a number field $F$ where the prime $p=2$ is unramified.
To do this we require one additional notion for bilinear lattices.

\begin{defn}[Even bilinear lattices]
Given a lattice $L$ in a bilinear space $(V,H)$, we 
respectively define the {\bf full} and {\bf partial $\a$-even subsets} of $L$ as
$$
L_{\a\even} := \{\x \in L \mid H(\x, \x) \in 2\a \},   
$$
$$
L_{\a\even\text{ at $\p$}} := \{\x \in L \mid H(\x, \x) \in \p^{\ord_\p(2\a)} \}.
$$
When $L$ is $\a$-valued these are actually {\bf sublattices}.  To see this, 
suppose 
$\x,\y \in L_{\a\even\text{(at $\p$)}}$ and notice that 
$$
H(\x + \y) = H(\x,\x) + H(\y,\y) + 2H(\x,\y) \subseteq 2\a \text{ (or $\p^{\ord_\p(2\a)}$)}
\implies 
\x + \y \in L_{\a\even\text{(at $\p$)}}.
$$
We say that $L$ is {\bf $\a$-even} if $L = L_{\a\even}$, and that $L$ is {\bf $\a$-even at $\p$} if $L = L_{\aevenatp}$.
\end{defn}

\begin{rem}[$\a$-even $\implies \a$-valued]
For a lattice $L$ in a bilinear space $(V,H)$ the property of being $\a$-even is stronger than being $\a$-valued.  
To see this, notice that if
$L$ is $\a$-even then 
$$H(\x,\y) = \frac{H(\x+\y,\x+\y) - H(\x,\x) - H(\y, \y)}{2} \in \frac{2\a}{2} = \a$$
for all $\x,\y \in L$, hence $L$ is $\a$-valued.
\end{rem}

\begin{rem}[Even observations]
Notice that the full and partial $\a$-even subsets/sublattices of an $\a$-valued bilinear lattice $L$ are related by the formula
$$L_{\a\even} = \cap_\p (L_{\a\even\text{ at $\p$}}),$$
and that $L$ is $\a$-even at all primes $\p\nmid 2$.
\end{rem}

For our purposes, the importance of $\a$-even bilinear lattices comes from the following simple correspondence with $\a$-valued quadratic lattices.

\begin{lem}[Even bilinear correspondence]  \label{lem:even_bilinear_correspondence}
Suppose that $L$ is a lattice in a 
quadratic space $(V,Q)$, $\a$ is a non-zero fractional ideal of a ($\p$-adic or number) field $F$, and $(V,H)$ is the  
Hessian bilinear space associated to $(V,Q)$.  Then 
$$
\begin{matrix} \text{$L \subseteq (V, H)$ is an} \\ \text{$\a$-even bilinear lattice} \end{matrix} 
\quad
\Longleftrightarrow 
\quad
\begin{matrix} \text{$L \subseteq (V, Q)$ is an} \\ \text{$\a$-valued quadratic lattice.} \end{matrix} 
$$
\end{lem}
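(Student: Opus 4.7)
The plan is to reduce both sides of the equivalence to the pointwise identity
$$H(\x,\x) = 2Q(\x) \quad \text{for all } \x \in L,$$
which is a direct consequence of the polarization identity (plugging $\y = \x$ into $H(\x,\y) = Q(\x+\y) - Q(\x) - Q(\y)$ gives $H(\x,\x) = Q(2\x) - 2Q(\x) = 4Q(\x) - 2Q(\x) = 2Q(\x)$). Since $F$ has characteristic zero (it is a number field or its $\p$-adic completion), multiplication by $2$ is an $\O_F$-module isomorphism $\a \to 2\a$, so $Q(\x) \in \a \iff 2Q(\x) \in 2\a \iff H(\x,\x) \in 2\a$.

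For the forward direction ($\Rightarrow$), suppose $L \subseteq (V,H)$ is $\a$-even, i.e.\ $L = L_{\a\even}$. Then for every $\x \in L$ we have $H(\x,\x) \in 2\a$, so by the above identity $2Q(\x) \in 2\a$, hence $Q(\x) \in \a$. This shows $Q(L) \subseteq \a$, so $L$ is an $\a$-valued quadratic lattice. For the backward direction ($\Leftarrow$), suppose $L \subseteq (V,Q)$ is $\a$-valued, i.e.\ $Q(L) \subseteq \a$. Then for every $\x \in L$ we have $H(\x,\x) = 2Q(\x) \in 2\a$, which says $\x \in L_{\a\even}$, so $L = L_{\a\even}$ and $L$ is $\a$-even in $(V,H)$.

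I expect no real obstacle here; the entire content of the lemma is encoded in the single relation $H(\x,\x) = 2Q(\x)$ together with the fact that multiplication by $2$ is a bijection on fractional ideals in a characteristic-zero setting. The only mildly subtle point worth a sentence in the write-up is to emphasize that the forward implication does \emph{not} need the a priori stronger-looking conclusion $H(L,L) \subseteq \a$ (which is the content of the preceding Remark on ``$\a$-even $\implies \a$-valued'') in order to recover $\a$-valuedness of $Q$; the self-pairing $H(\x,\x)$ alone is enough, precisely because $H(\x,\x)$ already equals $2Q(\x)$ on the nose.
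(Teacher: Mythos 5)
Your proof is correct and takes the same approach as the paper, whose entire argument is the one-line observation that the statement is ``just a restatement of the formula $H(\x,\x) = 2Q(\x)$''; you simply spell out both directions and make explicit the characteristic-zero fact that multiplication by $2$ is a bijection $\a \to 2\a$.
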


\begin{proof}
This is just a restatement of the formula $H(\x,\x) = 2Q(\x)$.
\end{proof}

\begin{lem}[The index of an even sublattice] \label{lem:index_of_an_even_sublattice_when_two_is_unramified}
Suppose that $L$ is an $\a$-valued lattice  in a non-degenerate bilinear space $(V,H)$ over a number field $F$, 
for some fractional ideal $\a$ of $F$.  If $L$ is not $\a$-even at $\p$ and $e_\p := \ord_\p(2) \leq 1$, then  the quotient 
$$
L/L_{\a\even} \cong \F_\p
$$
as abelian groups.
\end{lem}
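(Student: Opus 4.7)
The plan is to define a group homomorphism
\[
\varphi: L \to \a_\p/2\a_\p, \qquad \x \mapsto H(\x,\x) + 2\a_\p,
\]
and read off the conclusion from its kernel and image. First I would verify via polarization that $\varphi$ is a group homomorphism, since the cross term $\varphi(\x+\y)-\varphi(\x)-\varphi(\y) \equiv 2H(\x,\y) \pmod{2\a_\p}$ lies in $2\a_\p$ because $L$ is $\a$-valued. The kernel is, by construction, the local even sublattice $L_{\aevenatp}$.

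Next I would identify the target. Since $\a_\p$ is a principal $\O_\p$-module, $\a_\p/2\a_\p \cong \O_\p/2\O_\p = \O_\p/\p^{e_\p}$. The hypothesis that $L$ is not $\a$-even at $\p$ forces $\varphi$ to be nonzero, hence $e_\p \geq 1$; combined with $e_\p \leq 1$ this pins down $e_\p = 1$, so the target is $\F_\p$. Because $\F_\p$ is simple, the nonzero image of $\varphi$ equals all of $\F_\p$, yielding $L/L_{\aevenatp} \cong \F_\p$.

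To upgrade this to the stated isomorphism $L/L_{\a\even} \cong \F_\p$ I would invoke the identity $L_{\a\even} = \bigcap_\q L_{\a\even\text{ at }\q}$ from the earlier ``even observations'' remark, together with the automatic $\a$-evenness at primes $\q \nmid 2$ and the contextual assumption that $\p$ is the only prime above $2$ at which $L$ fails to be $\a$-even; under these we have $L_{\a\even} = L_{\aevenatp}$ and the conclusion follows. The main obstacle is precisely this reconciliation between the global quotient $L/L_{\a\even}$ and the purely local computation at $\p$ --- the local identification $\a_\p/2\a_\p \cong \F_\p$ under $e_\p = 1$ is then just polarization combined with the principality of fractional ideals in a local field.
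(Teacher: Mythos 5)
Your approach is essentially the same as the paper's: the paper also considers the map $Q_H(\x) := H(\x,\x)$ descending to an injective group homomorphism $L/L_{\a\even} \hookrightarrow \a/2\a$, uses the polarization identity for additivity, and uses $e_\p \leq 1$ together with non-evenness to pin down $e_\p = 1$ and identify the target as an $\F_\p$-vector space. One small gap in your write-up: to conclude surjectivity you say ``because $\F_\p$ is simple the nonzero image equals all of $\F_\p$,'' but $\varphi$ is a priori only a homomorphism of abelian groups, and $\F_\p$ has proper nonzero subgroups whenever $|\F_\p| > 2$. You need the additional observation (the paper's Remark \ref{rem:bilinear_and_quadratic_in_char_2}) that $\varphi(\lambda\x) = \lambda^2\varphi(\x)$ makes $\varphi$ a Frobenius-semilinear map, so its image is a genuine $\F_\p$-subspace of $\F_\p$, not merely a subgroup; only then does nonzero imply surjective. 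On the positive side, you explicitly flag the reconciliation between $L/L_{\aevenatp}$ and $L/L_{\a\even}$, a point the paper's proof passes over silently (its displayed injection into $\a/2\a$ only yields $\F_\p$ on the nose when $\p$ is the unique prime over $2$ at which $L$ fails to be $\a$-even, which is indeed how the lemma is used in Algorithm \ref{alg:construct_maximal_quadratic_lattice}).
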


\begin{proof}
Notice that $\p\mid 2$ since otherwise $L_{\a\even} = L$, which is not true by assumption.  Consider the map $Q_H(\x) := H(\x, \x)$ which descends to a well-defined injective map 
$$
Q_H: L/L_{\a\even} \hookrightarrow \a/2\a. 
$$
From the definition of $L_{\a\even}$ and
the polarization identity we know that $Q_H$ 
is an (additive) homomorphism of abelian groups, but usually not a linear map of $\O_F/\p^{e_\p}$-modules.  
Since $e_\p \leq 1$ and $\p\mid 2$ we know that $e_\p = 1$ and so both the domain and range are $\F_\p$-vector spaces.  By Remark \ref{rem:bilinear_and_quadratic_in_char_2} we know that $Q_H$ is a semilinear map, and so its image is a non-zero subspace of $\F_\p$.  This shows $Q_H$ is surjective, hence bijective, proving the lemma.
\end{proof}

\begin{alg}[Construct a maximal $\a$-valued quadratic lattice] \label{alg:construct_maximal_quadratic_lattice}
Given a non-degenerate quadratic space $(V,Q)$ over a number field $F$ where $p=2$ is unramified, and a non-zero fractional ideal $\a$ of $F$, we give an algorithm for producing a maximal $\a$-valued quadratic lattice $L$ on $(V,Q)$.
\begin{enumerate}
\item Use Algorithm \ref{alg:construct_maximal_bilinear_lattice} to find an $\a$-maximal lattice $L$ in the Hessian bilinear space $(V,H)$ associated to $(V,Q)$.  
Let $\SS$ be the finite set of primes $\p$ of $\O_F$ where 
$L$ is not $\a$-even at $\p$.  (Necessarily $\p\in\SS \implies \p\mid 2$.)
\item For each $\p\in \SS$, we use Theorem \ref{thm:neighbors_as_nonsingular_points_and_construction} to check 
if some $\p$-neighbor $L'$of $L$ is $\a$-even at $\p$.
If so, then replace $L$ by $L'$.  If not, then replace $L$ by $L_{\aevenatp}$.
\item This remaining $L$ is a maximal $\a$-even lattice in $(V,H)$, so $L$ is also a maximal $\a$-valued quadratic lattice on $(V,Q)$.
\end{enumerate}
\end{alg}

\begin{proof}
Given $L$ in step 1, we know by the correspondence in Lemma \ref{lem:even_bilinear_correspondence} that $L$ is an $\a$-maximal quadratic lattice at all primes $\p\notin \SS$.  For each $\p\in \SS$ (so necessarily $\p\mid 2$) we know that $L_{\aevenatp}$ is $\a$-even at $\p$, but it may not be maximal among lattices in $(V,H)$ with this property.  If $L_{\aevenatp}$ is maximal $\a$-even at $\p$, then by Lemma \ref{lem:even_bilinear_correspondence} we see that it is also a maximal $\a$-valued {\it quadratic} lattice at $\p$ and at all primes $\q\notin \SS$.  If $L_{\aevenatp}$ is not maximal $\a$-even at $\p$, then we have the inclusion $L_{\aevenatp} \subsetneq L' \subseteq L'_{Max}$  where $L'$ maximal $\a$-even at $\p$, $L'_{Max}$ maximal $\a$-valued at $\p$,   
and $(L')_\q = (L'_{Max})_\q = L_\q$ for all primes $\q\neq \p$.

By the Lemma \ref{Lem:sublattice_discriminants}, we have that  
$$
[L:L_{\aevenatp}]^2 \cdot |\D(L)|
= |\D(L_{\aevenatp})|
= [L'_{Max}:L_{\aevenatp}]^2 \cdot |\D(L'_{Max})|,
$$
but Lemma \ref{lem:uniqueness_of_maximal_discriminant_modules} shows that $|\D(L)| = |\D(L'_{Max})|$, so $[L:L_{\aevenatp}] = [L'_{Max}:L_{\aevenatp}]$.  This together with Lemma \ref{lem:index_of_an_even_sublattice_when_two_is_unramified} shows that $[L'_{Max}:L_{\aevenatp}] = [L:L_{\aevenatp}] = \NormF(\p)$,  so $[L':L_{\aevenatp}]$ divides $\NormF(\p)$.

Since
$L'/L_{\aevenatp}$ is a non-zero torsion $\O_\p$-module its annihilator is $\p^k$ for some $k$, we know that $[L':L_{\aevenatp}] = \NormF(\p^k)$, so $k=1$ and $L' = L'_{Max}$.  
Finally, if $H(L, L') \subseteq \a$ then the lattice $L+L'$ would be $\a$-valued, violating the maximality of $L$.  Therefore $L'$ is a $\p$-neighbor of $L$.
\end{proof}

\begin{rem}[Maximal Quadratic lattices when $p=2$ is ramified]
If the prime $p=2$ ramifies in $F$ then the theory of integral quadratic forms/lattices is more complicated at any prime ideal $\p$ of $\O_F$ where $e_\p := \ord_\p(2) >1$. (E.g.  Lemma \ref{lem:index_of_an_even_sublattice_when_two_is_unramified}.)  In the language of O'Meara's book \cite[\S82E, p227]{OMeara:1971zr} this is because the containment of norm and (Gram) scale ideals
$$
2\s(L_\p) \subseteq \n(L_\p) \subseteq \s(L_\p)
$$
can 
both be strict containments 
when $e_\p> 1$, so one cannot normalize the values of quadratic form (given by $\n(L)$) by studying the values of the associated Hessian bilinear form (given by $2\s(L)$) alone.  In general there are many intermediate possibilities for the local norm ideals $\n(L_\p)$ which must be analyzed to pass from a maximal bilinear lattice to a maximal quadratic lattice, which makes it difficult to generalize this algorithm to deal with ramified primes $\p\mid 2$.  In a future paper, we hope to give a different algorithm that 
produces $\a$-maximal quadratic lattices over number fields where the prime $p=2$ is allowed to ramify. 
\end{rem}

\section{Neighbors and Genera}

In this section we will explain theory of ``Neighboring lattices", originally due to Kneser \cite{Kneser:1957fk}, and how it can be used to find representatives for a given genus of quadratic lattices.  Special care will be given to describe $\p$-neighbors for an arbitrary (possibly dyadic) prime ideal $\p$ in the ring of integers of a number field, as this will be useful in the passage from a maximal bilinear lattice to a maximal quadratic lattice.

\begin{defn}[$\p$-neighbors]
Suppose that $L$ and $L'$ are two $\a$-valued quadratic lattices in a common quadratic space $(V,Q)$ over a number field $F$, and that $\p$ is a (non-zero) prime ideal of $\O_F$.
Then we say that $L$ and $L'$ are {\bf $\p$-neighboring $\a$-lattices} (or {\bf $\p$-neighbors}) if $[L:L\cap L'] = [L':L\cap L']=\NormF(\p)$ and the Hessian bilinear form $H(L,L') \nsubseteq \a$.
\end{defn}

The $\p$-neighboring lattices of a given lattice $L$ can be described very explicitly in terms of the vectors of $L$ whose values generate the ideal $\a\p$.  These vectors also have a very nice description in terms of the residual quadric in the projective space of the $\F_\p$-vector space $L/\p L$, 
which we now define.

\begin{defn}[Residual Quadrics]
Suppose that $L$ is an $\a$-valued lattice in a $n$-dimensional quadratic space $(V,Q)$ over a number field $F$, and that $\p$ is a (non-zero) prime ideal of $\O_F$.  Then we define the {\bf residual $\a$-quadric at $\p$} as the quadric hypersurface $\C_{\p; \a} \subseteq \P(L/\p L) \cong \P^{n-1}(\F_\p)$  given by 
the homogeneous equation $Q(\x) \in \a\p$. I.e.,
$$
\C_{\p; \a} := \C_{L, \p; \a} := \P(\{ \x\in L/\p L 
\mid Q(\x) \in \a\p  \text{ and } \x \neq \vec 0 \}).
$$
\end{defn}

\begin{rem}
In terms of the latttice $L$, those $\x \in L$ reducing to $\C_{\p; \a}$ are exactly those $\x \not\in \p L$ for which $Q(\x) \in \a\p$.
\end{rem}

The singular points on $\C_{\p; \a}$, which can be understood from several different perspectives.  Saying that $P\in\C_{\p; \a}$ is a {\bf nonsingular point} by definition means that the gradient vector $(\vec \nabla Q)(P) := [\frac{\partial Q}{\partial x_1}(P):  \cdots: \frac{\partial Q}{\partial x_n}(P)] \neq \vec 0 \in (\a/\p\a)^n$, where $[x_1 : \cdots : x_n]$ are homogeneous coordinates for $\P^{n-1}(\F_\p)$.  
We can also rewrite the gradient condition in terms of the Hessian bilinear form.

\begin{lem}[Hessian singularity criterion] \label{Lem:Hessian_and_singular_points}
Suppose that $P \in \C_{\p; \a}$ corresponds to the non-zero line $\F_\p \cdot \v_P \in L/\p L$.  Then $P$ is a singular point of $\C_{\p; \a} \iff$ the $(\a/\a\p)$-valued linear form $H(\v_P, \cdot)$  on $L/\p L$ is identically zero.  
\end{lem}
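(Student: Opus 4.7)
The plan is to translate both conditions into the same coordinate statement: ``$H(\v_P, e_i) \in \p\a$ for every $i$,'' where $\{e_1,\dots,e_n\}$ is any $\O_F$-basis of $L$. Once both sides are shown to unwind to this, the equivalence is immediate since an $\F_\p$-linear form on $L/\p L$ is determined by its values on a basis.

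The key input, which I would prove first, is the identity
\[
\frac{\partial Q}{\partial x_k}(\x) \;=\; H(e_k, \x)
\]
in $\O_F[x_1,\dots,x_n]$. I would derive this in one of two equivalent ways: (i) write $Q(\x) = \sum_{i\leq j} c_{ij} x_i x_j$ and expand $H(e_k,\x) = Q(e_k + \x) - Q(e_k) - Q(\x)$ term by term, noting that the diagonal coefficient contributes $2c_{kk} x_k$ and each off-diagonal $c_{ij}$ with $k\in\{i,j\}$ contributes a single linear term; or (ii) apply the Polarization Identity Lemma, which already gives $H(\x,\y) = \x^t A \y$ with $a_{ij} = \partial^2 Q/\partial x_i \partial x_j$, so that $H(e_k, \x) = (A\x)_k = \partial Q/\partial x_k(\x)$. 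Both derivations are purely formal and work uniformly over any characteristic (in particular, they do not require dividing by $2$, so they are safe for $\p \mid 2$).

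With this identity in hand, the gradient at $\v_P$ becomes the tuple $(H(\v_P, e_1),\dots, H(\v_P, e_n))$, using symmetry of $H$. Since $L$ is $\a$-valued, each $H(\v_P, e_i)$ lies in $\a$, so we may reduce modulo $\p\a$. By the definition of a singular point, $P$ is singular iff $(\vec\nabla Q)(\v_P) \equiv \vec 0$ in $(\a/\p\a)^n$, iff every $H(\v_P, e_i)$ lies in $\p\a$, iff the induced $\F_\p$-linear map $L/\p L \to \a/\p\a$, $\bar{\mathbf w} \mapsto H(\v_P,\mathbf w) + \p\a$, is the zero map. A brief consistency check confirms that this condition is a property of the line $\F_\p\cdot \v_P$ and not of the chosen representative: replacing $\v_P$ by $\lambda \v_P + \mathbf w$ with $\lambda \in \O_\p^\times$ and $\mathbf w \in \p L$ changes $H(\v_P,\cdot)$ by a unit scalar plus the form $H(\mathbf w,\cdot)$, and $\a$-valuedness forces $H(\p L, L) \subseteq \p\a$, so the error vanishes after reduction.

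The only mild obstacle is keeping the characteristic-two case in view, since there the naive intuition ``$\nabla Q = A\v_P$'' relies on dividing by $2$. This is circumvented by appealing to the explicit expansion in Step 1 (or equivalently the Polarization Identity Lemma), which encodes the Hessian matrix directly from second partial derivatives without any factor of $\tfrac12$; once that step is in hand, everything else is a matter of bookkeeping modulo $\p\a$.
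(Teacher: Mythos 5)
Your proposal is correct and takes essentially the same route as the paper: the paper's one-line proof rests on exactly the identity $H(\x,\y) = (\vec\nabla Q)(\x)\cdot\y$, and you have simply unpacked why that identity holds (and why it is characteristic-free) plus added the well-definedness check, which the paper leaves implicit.
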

\begin{proof}
This follows from noticing that $H(\x, \y)  = (\vec \nabla Q)(\x) \cdot \y$, 
hence $P$ is singular $\iff H(\v_P, \w) \in \a\p$.
\end{proof}

\begin{rem}[Singular points and duality]
Another description of the singular points of $\C_{\p; \a}$ in terms of dual lattices can be given by noticing that for $\x,\y \in L$ we have that 
$$
H(\x, \y) \in \a\p \text{ for all }\y \in L \iff  \x \in (\p L^{\#\a} \cap L).
$$
This shows that the singular points of $\C_{\p; \a}$ are exactly the points of $\C_{\p; \a}$ lying in the projective subspace $\P((\p L^{\#\a} \cap L)/\p L) \subseteq \P(L/\p L)$.  For this reason, we call $\P((\p L^{\#\a} \cap L)/\p L)$ the {\bf (residual) $\a$-singular subspace}.

It is interesting to notice that the singular subspace is closely related to the structure of the $\a$-discriminant module $\D_\a(L)$, and that for $\a$-modular lattices (where $L^{\#\a} = L$) there are no singular points on $\C_{\p; \a}$.
\end{rem}

With these definitions we can now parametrize the $\p$-neighbors of $L$ in terms of the non-singular points of its residual quadric at $\p$.

\begin{thm}[$\p$-neighbors via residual geometry]  \label{thm:neighbors_as_nonsingular_points_and_construction}
The $\p$-neighboring $\a$-lattices of a given $\a$-valued 
quadratic lattice $L$ are in bijection with the non-singular points $\C_{\p; \a}^\text{ns}$ on its residual $\a$-quadric at $\p$.
More explicitly, this bijection is given by the map
$$
\eta: P \in \C_{\p; \a}^\text{ns} 
\quad 
\mapsto 
\quad 
L' := L'' + \tfrac{1}{\p} \cdot \v_P
$$
where $L'' := \{\x \in L \mid H(\v_P, \x) \equiv 0 \pmod {\a\p}\}$
 and $\v_P$ is any lift to $L$ of a non-zero isotropic vector in $L/\p^2 L$ (i.e. $Q(\v_P) \in \p^2 \a$ and $\v_P \notin \p^2L$)  that reduces to the given point $P\in\C_{\p; \a}^\text{ns}$ under the canonical reduction map $L \ra \P(L/\p L)$.
Such a $\v_P \in L$ always exists since $P$ is a non-singular point,
 and also $L'' = L \,\cap\, L'$.
\end{thm}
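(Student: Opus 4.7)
The plan is to prove the bijection in two halves, prefaced by an existence check for $\v_P$. Starting with any lift $\v_0 \in L$ of $P$, we have $Q(\v_0) \in \a\p$ by the definition of $\C_{\p;\a}$, but perhaps not in $\a\p^2$. Seeking an adjusted lift $\v_P = \v_0 + \pi \w$ (with $\pi$ a local uniformizer at $\p$ and $\w \in L$), the polarization identity reduces the desired condition $Q(\v_P) \in \a\p^2$ to solving
\[
H(\v_0, \w) \equiv -Q(\v_0)/\pi \pmod{\a\p}.
\]
By Lemma \ref{Lem:Hessian_and_singular_points}, non-singularity of $P$ makes the $\F_\p$-linear map $L/\p L \to \a/\a\p$ given by $\x \mapsto H(\v_0, \x)$ non-zero, hence surjective onto its one-dimensional target, so such a $\w$ exists. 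A parallel computation shows that any two valid lifts differ by $\v_P' = \alpha\v_P + \pi\w'$ with $\alpha$ a local unit and $\w' \in L''$, and one checks that this discrepancy leaves both $L''$ and $L' := L'' + \p^{-1}\v_P$ unchanged, so the forward map $P \mapsto L'$ is well-defined.

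Next, I would verify that $L'$ is an $\a$-valued $\p$-neighbor of $L$. The $\a$-valuedness follows from direct estimates $Q(\p^{-1}\v_P) \subseteq \p^{-2}\cdot \a\p^2 = \a$ and $H(\p^{-1}\v_P, L'') \subseteq \p^{-1}\cdot\a\p = \a$. For $L'' = L \cap L'$, the inclusion $L'' \subseteq L \cap L'$ is immediate; conversely, if $\x \in L \cap L'$ writes as $\x = \y + \beta\v_P$ with $\y \in L''$ and $\beta \in \p^{-1}$, then $\beta\v_P = \x - \y \in L$ forces $\beta \in \O_F$ (else $\v_P \in \p L$, contradicting $P \neq 0$), so $\x \in L''$. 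The index $[L : L''] = \NormF(\p)$ is identified with the kernel of the surjective map $L/\p L \to \a/\a\p$ constructed above, and $[L' : L''] = \NormF(\p)$ arises from $L'/L'' \cong \p^{-1}/\O_F \cong \F_\p$. Finally, $H(L, L') \not\subseteq \a$ because $\p^{-1}H(L, \v_P) \not\subseteq \a$ by non-singularity.

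For the reverse direction and mutual inverse, given any $\p$-neighbor $L'$ of $L$, I would set $M := L \cap L'$, pick $\v' \in L' \setminus M$, and locally define $\v_P := \pi\v' \in L$. Since $\v' \notin L$ we have $\v_P \notin \p L$, and $Q(\v_P) = \pi^2 Q(\v') \in \p^2\a$, so the image $P$ of $\v_P$ in $\P(L/\p L)$ lies in $\C_{\p;\a}$; non-singularity of $P$ then follows from the neighbor condition $H(L, L') \not\subseteq \a$ (rescaled by $\pi$) giving $H(\v_P, L) \not\subseteq \a\p$, together with Lemma \ref{Lem:Hessian_and_singular_points}. To close the bijection, one checks $L''_P = M$ (both have index $\NormF(\p)$ in $L$, with $M \subseteq L''_P$ coming from $\a$-valuedness of $L'$), which gives $L''_P + \p^{-1}\v_P = M + \O_F \v' = L'$; the other composition is similar. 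The main obstacle I anticipate is managing the local-global interplay cleanly, since the choice $\v_P := \pi\v'$ is naturally local at $\p$ yet the constructed lattice must be a well-defined global object, but this should be mild because everything at primes $\q \neq \p$ is unchanged from $L$.
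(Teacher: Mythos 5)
Your proposal is correct and follows essentially the same line of argument as the paper: the existence of a suitable lift $\v_P$ via the polarization identity and Lemma \ref{Lem:Hessian_and_singular_points}, the index computations $[L:L'']=[L':L'']=\NormF(\p)$ using the non-singularity of $P$ (i.e.\ surjectivity of $\x\mapsto H(\v_P,\x)$ onto $\a/\a\p$), and the inverse construction $P := \P(\p(L+L')/\p L)$ with $\v_P := \pi\v'$. You organize the argument around well-definedness plus mutual-inverse checks rather than the paper's explicit injectivity/surjectivity split, and you verify a few points the paper leaves implicit ($\a$-valuedness of $L'$ and independence of the choice of lift $\v_P$), but the key ingredients and structure are the same.
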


\begin{proof}
{\it 1) Non-singular shearing:}
To see that $P \in \C_{\p; \a}^\text{ns}$ gives rise to some $\v_P$ as above, notice that Lemma \ref{Lem:Hessian_and_singular_points} says that the gradient $(\vec \nabla Q)(\v_P) \in L/\p L$ is non-zero in $\a/\p\a$, so by chosing any lift $\w \in L$ of $(\vec \nabla Q)(\v_P)$ we can find some $\lambda \in \p$ so that $Q(\v_P + \lambda \w) \in \a\p^2$.

\smallskip
{\it 2) $\p$-neighbors:}
To see that $\eta( P)$ is a $\p$-neighboring $\a$-lattice of $L$, we notice that by Lemma \ref{Lem:Hessian_and_singular_points} the non-singularity of $P$ shows that the linear form $H(\v_P, \cdot)$ on $L$ has non-trivial image in $\a/\p\a$, hence $L/L'' \cong \F_\p$ as abelian groups and $[L:L''] = \NormF(\p)$.  Since $\v_P \notin \p L$, we know $\p^{-1} \v_P \notin L$ and so similarly $L/L'' \cong \F_\p$ and  $[L':L'']= \NormF(\p)$.  
Finally we see that $L$ and $L'$ are $\p$-neighbors since from 1) we have $H(\v_P, \w) \notin \p\a$ for some $\w \in L$, so therefore 
$H(\w, \p^{-1}\v_P)\subseteq H(L, L') \not\subseteq \a$.

\smallskip
{\it 3) Injectivity:}
To see that $\eta( P)$ is injective suppose that $\v_P$ and $\v_Q$  in $L$ correspond to the points $P$ and $Q$ in $\C_{\p; \a}^\text{ns}$, and that $\eta( P) = \eta(Q) = L'$.   Then from 2) we know that $L + L' = \p^{-1}\v_P + L = \p^{-1}\v_Q + L$, so $\v_P$ and $\v_Q$ lie on the same residual line through the origin $\p(L + L')/\p L \subseteq L/\p L$, so $P = Q$ on $\C_{\p; \a}^\text{ns} \subseteq \P(L/\p L)$.

\smallskip
{\it 4) Surjectivity:}
To see that $\eta( P)$ is surjective we first start with a $\p$-neighbor $L'$ of $L$ and construct some point $P\in\C_{\p; \a}^\text{ns}$ by the rule 
$
P := \P(\p(L+ L')/\p L) \subset \P(L/\p L).
$
Since $[L + L':L] = [L': L\cap L'] = \NormF(\p)$ we know that $\dim_{\F_\p}(L + L') = \dim_{\F_\p}(L) + 1$ and so $P$ is a point in $\P(L/\p L)$.  To see that $P \in \C_{\p; \a}$ we choose some non-zero $\v' \in L'/(L \cap L')$ giving $L + L' = L + \O_F \v'$ and set $\v_P := \pi_\p \v'$ (giving $\P(\v_P) = P$).  Knowing $\v' \in L'$ ensures that $Q(\v') \in \a$, so $Q(\v_P) \in \a\p^2$ showing $P \in \C_{\p; \a}$.  The non-singularity of $P$ follows 
because 
$$
H(\p(L + L'), L') = H(\p L, L) + H(\p L', L) \subseteq \p\a + \a \subseteq \a
$$
(using here that $\p L' \subseteq L \implies H(L, L')\subseteq \p^{-1}\a$), 
and if $P$ were singular then it would force $H(L', L) \subseteq \a$, which cannot occur so $P \in \C_{\p; \a}^\text{ns}$.

To see that $\eta( P) = L'$, we first compute the sublattice
$$
K := \{ \x \in L + L' \mid H(\v', \x) \subseteq \a\}  \subset L + L'.
$$
for $\v'$ as above, which is proper since $H(L,L') \not\subseteq \a$.  Since $\v' \in L'$ we see that $L' \subseteq K$, and $H(L',L') \subseteq \p^{-1}\a$ tells us that $(L+L') / K\cong \F_\p$ as abelian groups, giving $[L+L' : K]= \NormF(\p)$ and so $K= L'$.  With this, we see that $\eta$ first constructs the sublattice $L'' = L \cap K = L \cap L'$ and then takes $L' := \p^{-1}\v_P + L''$, and this $L'$ must  be the $\p$-neighbor $L'$ that we started with since $\pi_\p^{-1} \v_P = \v' \in L'$.  This completes the proof the Theorem.
\end{proof}

\begin{rem}[$\p$-neighbor references]
The idea of neighboring lattices was introduced in Kneser's paper \cite{Kneser:1957fk}, and is also discussed in his German book \cite{Kneser_book}.
There are other discussions of $\p$-neighboring lattices in English (e.g. \cite[p202]{Gerstein_book}, \cite[\S3.1, pp31-35]{tornaria_thesis}, \cite[\S1, pp1-3]{Schulze-Pillot:1991aw}, \cite[\S2, pp738--743]{Scharlau-Hemkemeier}), though 
these restrict either the base field (i.e. $F = \Q$) or the primes considered (i.e. $\p\nmid |\D(L)|$) or both, and also assume that the quadratic lattices are $\O_F$-valued.  

The author has been unable to find a description of 
$\p$-neighbors for $\a$-maximal lattices discussed in the literature, so the notion of $\p$-neighboring $\a$-lattices here appears to be new (though perhaps not very deep).  We include it here 
to show that there is a natural notion of $\p$-neighbors for lattices of any fixed value ideal $\a:= Q(L) \cdot \O_F$,
and to provide context for results like \cite{MR0337978, Brzezinski1974} that begin to describe connections between arithmetic and geometric models of lattices.
This connection is an interesting direction for future research.
\end{rem}

For completeness, we conclude with an important application of the theory of $\p$-neighboring lattices 
to enumerate all classes in a given genus of totally definite quadratic lattices over a totally real number field $F$. 
This result is essentially due to Benham and Hsia \cite{Benham_Hsia} where they actually give a way to compute all classes in the spinor genus of $L$, and here we describe a well-known modification of their idea using a mass formula to determine when 
the algorithm terminates.

The advantage of this modification is that we do not need to compute spinor norms in the idele group of $F$, but it comes at the rather high cost of needing an exact mass formula for the genus of quadratic lattices in question.

\begin{alg}[Enumerating classes in a genus of quadratic lattices; \cite{Benham_Hsia}]  \label{alg:enumerate_classes_of_quadratic_lattices_in_a_genus}
Given an $\a$-valued quadratic lattice $L$ in a non-degenerate totally definite quadratic space $(V,Q)$ over a totally real number field $F$ of rank $\geq 3$, then we give an algorithm for finding representative lattices $L_i \subset V$ for every class in the genus of $L$.  
\end{alg}

\begin{proof}
Begin with the set of lattices $\SS = \{L\}$. Take the smallest (w.r.t. $|\NormF(\cdot)|$) prime $\p\nmid |\D(L)|$ and compute the set $\mathbb{T}$ of (finitely many) non-isometric $\p$-power neighbors of each $L \in \SS$ and append $\mathbb{T}$ to $\SS$.  
(Here we know we have computed $\mathbb{T}$ when taking $\p$-neighbors of all classes of lattices of $\mathbb{T}$ produces no new classes.)
If the partial mass 
$$
\textrm{Mass}(\SS) := \sum_{L' \in \SS}\frac{1}{\#\mathrm{Aut}(L')}
$$
satisfies $\mathrm{Mass}(\SS) < \mathrm{Mass}(L)$, then repeat the procedure for the next smallest prime until $\textrm{Mass}(\SS) = \textrm{Mass}(L)$.
By \cite[Proposition 1, (1.1), and Theorem 2]{Benham_Hsia} the $\p$-power neighbors at the primes $\p$ of bounded norm 
exhaust all classes in the genus of $L$.
\end{proof}

\begin{rem}[Mass formula and halting conditions for indefinite lattices] \label{rem:halting_conditions_for_indefinite_lattices}
Algorithm \ref{alg:enumerate_classes_of_quadratic_lattices_in_a_genus} also works for indefinite lattices $L$, though 
in this case 
the mass of a genus $\mathrm{Gen}(L)$ is given as a sum over all class representatives $L_i$ of the covolumes $\mathrm{Vol}(\mathcal{Z}/\mathrm{Aut}_{\O_F}(L_i))$ of the integral automorphism group $\mathrm{Aut}_{\O_F}(L_i)$ with respect to some fixed measure on the symmetric space $\mathcal Z$ of the orthogonal group of $Q$ (e.g. \cite{Ha_thesis_paper}).  While these terms are computable in principle (by giving a presentation for $\mathrm{Aut}_{\O_F}(L_i)$ and computing an explicit integral), this is not nearly as pleasant as counting the finitely many automorphisms arising in the totally definite case.
\end{rem}

\begin{rem}[Class numbers for indefinite lattices when $n\geq 3$]
Conveniently, the computation of class numbers of indefinite lattices in $n \geq 3$ variables is much simpler that for definite lattices because of the strong approximation property of the spin group (e.g. \cite[\S104:4-5, pp315-319; \S102:7-8, pp300-304]{OMeara:1971zr}).  From this property one can show that each spinor genus in such a genus contains exactly one class, so the class number is just the number of spinor genera in the genus and this number is known to be an easily (locally) computed power of two.  Therefore Algorithm \ref{alg:enumerate_classes_of_quadratic_lattices_in_a_genus} and Remark \ref{rem:halting_conditions_for_indefinite_lattices} would only be interesting when $n=2$, in which case these masses can be computed in terms of logarithms of fundamental units in quadratic extensions of $F$.
\end{rem}

\begin{rem}[Literature]  \label{rem:class_enumeration_literature}
The idea of using an explicit mass formula to determine explicit genus representatives is well-known to experts, 
and has been used for some time to prove that certain genera have class number one 
(e.g. \cite[\S16.6, pp133-134]{Siegel:1963vn}, \cite[\S5.16, pp33-34]{Shimura_exact_mass}) or small class number, 
however almost all actual computational results have been limited to either the case $F= \Q$ or to unimodular lattices over real quadratic fields.  
When $[F:\Q] > 1$, the author is only aware of the papers \cite{Costello:1987kx,Hsia:1989yq,Hsia:1989vn,Hung:1991ys,Scharlau:1994ys}.
\end{rem}

\bibliographystyle{plain}
\bibliography{banff_maximal}

\end{document}